\documentclass[12pt]{amsart}
\usepackage[left=3cm,right=3cm,top=4cm,bottom=4cm]{geometry}
\usepackage{amsmath}
\usepackage{amsfonts}
\usepackage{amssymb}
\usepackage{mathtools}
\usepackage{blkarray}
\usepackage{lmodern}
\usepackage{blkarray, bigstrut}
\usepackage{booktabs}
\usepackage{mathrsfs}
\usepackage{xfrac} 
\usepackage[colorlinks=true]{hyperref}
\usepackage{bm}
\usepackage{lipsum}
\usepackage{scalerel}
\usepackage{stackengine,wasysym}

\newcommand\reallywidetilde[1]{\ThisStyle{%
  \setbox0=\hbox{$\SavedStyle#1$}%
  \stackengine{-.1\LMpt}{$\SavedStyle#1$}{%
    \stretchto{\scaleto{\SavedStyle\mkern.2mu\AC}{.5150\wd0}}{.6\ht0}%
  }{O}{c}{F}{T}{S}%
}}

\def\test#1{$%
  \reallywidetilde{#1}\,
$\par}

\newcommand{\proj}{{\textnormal{proj}}}

\newtheorem{theorem}{Theorem}[section]
\newtheorem{lemma}[theorem]{Lemma}
\newtheorem{corollary}[theorem]{Corollary}

\theoremstyle{definition}
\newtheorem{definition}[theorem]{Definition}
\newtheorem{example}[theorem]{Example}
\theoremstyle{remark}
\newtheorem{remark}[theorem]{Remark}
\theoremstyle{remark}
{
\newtheorem*{notation}{Notation}
}
\numberwithin{equation}{section}

\newcommand{\CC}{{\mathbb C}}

\newcommand{\NN}{{\mathbb N}}

\newcommand{\bigslant}[2]{{\raisebox{.2em}{$#1$}\left/\raisebox{-.2em}{$#2$}\right.}}

\hypersetup{
  pdftitle={},
  pdfauthor={},
  pdfsubject={},
  pdfkeywords={},
  colorlinks=true,
  breaklinks=true,
  bookmarksopen=true,
  bookmarksnumbered=true,
  pdfpagemode=UseOutlines,
  plainpages=false,
  linkcolor=black,
  citecolor=black,
  anchorcolor=black,
  urlcolor  = black
  }
\begin{document}

\title{On Two Approaches to Cluster Structures on Partial Flag Varieties}

\author{Fayadh Kadhem}
\address{Faculty of Professional Studies\\
Bahrain Polytechnic\\
Isa Town, Bahrain}
\email{fayadh.kadhem@polytechnic.bh}

\subjclass{Primary 13F60; Secondary 14M15, 13N15.}
\date{\today}
\maketitle

\begin{abstract}
Continuing our previous work, this paper closely studies the relationship between the cluster algebra structures on the coordinate ring of Schubert cells and those on the coordinate ring of partial flag varieties. We give a finite-type classification for these cluster structures and point out several results that were left open in our previous work.\\
\end{abstract}

\maketitle

\section{Introduction}
\label{introduction}

Cluster algebras were introduced in the early 2000s by Fomin and Zelevinsky \cite{FZ0} in their research on total positivity. Roughly speaking, cluster algebras are commutative algebras generated inside an ambient field by a set of variables that are divided into two kinds: \textit{mutable} and \textit{frozen}. They are also endowed with a \textit{skew-symmetrizable} matrix that allows one to define recursive relations among the mutable variables and proceed from one cluster to another.

Amazingly, mathematicians quickly discovered many strong connections between cluster algebras and several other areas of mathematics. One important connection with Lie theory arises through their finite type classification by the classical Dynkin diagrams \cite{FWZ2,FZ2}. In the context of algebraic geometry, Scott showed in 2006 that the homogeneous coordinate ring of the Grassmannian can be endowed with a cluster algebra structure \cite{S}, allowing a much better understanding of its properties. This opened the door to many subsequent developments. A significant breakthrough came in 2008 by Gei{\ss}, Leclerc, and Schr\"{o}er (GLS). They showed in \cite{GLS1,GLS2} that the coordinate ring of a Schubert cell admits a cluster algebra structure for types $A_n$ and $D_4$. Moreover, they showed that this structure can be lifted to give a cluster structure (after suitable localization) on partial flag varieties. This program was later extended to more general types, covering all simply-laced cases \cite{GLS3}. Consequently, Demonet \cite{D} was able to use their construction to lift the cluster algebra structure to the coordinate rings of partial flag varieties.

In a different direction, Goodearl and Yakimov (GY) constructed a large class of cluster algebras in connection with Poisson algebras \cite{GY}. As a consequence of their work, it was proved that the coordinate ring of any Schubert cell, regardless of type, admits a cluster algebra structure. This work served as the foundation for constructing another cluster algebra structure on the coordinate ring of a partial flag variety. The explicit cluster structure was developed mainly in \cite{Kadhem, Kadhem2}.\

One important question that arises is how the cluster algebra structures of GLS and GY are related. Consequently, what can one conclude about the relationship between the cluster structures of \cite{D} and \cite{Kadhem}?\

In this paper, we prove that the finite type classifications of GLS and GY (and consequently those of \cite{D} and \cite{Kadhem}) are exactly the same, and we point out several consequences of this fact.

The paper is organized as follows. Section 2 provides an overview of cluster algebras. In Section 3, we give a brief review of flag varieties. Section 4 introduces the explicit cluster structures that are the main focus of the paper. Section 5 contains the finite type classification, and Section 6 discusses some consequences of the results obtained.

\section{Cluster Algebras}
In this section, we give a quick overview of cluster algebras and their main results. The reader who is interested in more details is recommended to see \cite{FWZ}, \cite{FZ}, or \cite{W}.

\begin{definition}
Let $n$ and $m$ be positive integers such that $n \leq m$. A \textit{seed} is a pair $(\textnormal{\textbf{x}},B)$ such that $\textnormal{\textbf{x}}$ is a tuple of algebraically independent variables $(x_1,...,x_n,x_{n+1},...,x_m)$ and $B$ is a skew-symmetrizable $m \times n$ matrix, that is, a matrix whose north $n\times n$ submatrix is skew symmetric up to multiplying each row $r_i$ by a nonzero integer $c_i$. The first $n$ variables are called \textit{mutable}, the remaining $m-n$ ones are called \textit{frozen}, and the matrix $B$ is called the \textit{exchange matrix}.
\end{definition}

\begin{notation}
In the previous definition, if $x_i$ is a variable of the tuple $\textnormal{\textbf{x}}$, we may express this by writing $x_i \in \textnormal{\textbf{x}}$.
\end{notation}

The previous definition suggests a hidden distinction between the first $n$ variables and the rest $m-n$ ones. The following definition clarifies this distinction.

\begin{definition} \label{mutation}
Let $(\textnormal{\textbf{x}},B)$ be a seed and denote the entries of $B$ by $b_{ij}$. Let $k$ be an index of a mutable variable. The \textit{mutation} $\mu_k$ at $k$ is a transformation to another seed $(\textnormal{\textbf{x}}',B')$ such that
$$x_k'= \dfrac{\prod_{b_{ik}>0} x_i^{b_{ik}} + \prod_{b_{ik}<0} x_i^{-b_{ik}}}{x_k}$$
and $x_i=x_i'$ for all $i \neq k$. Also, the exchange matrix $B'$ of the new seed is given by

\begin{equation}
    \label{eq1}
b'_{ij}=\begin {cases}
-b_{ij}, & \text{if}\ i=k \text{ or } j=k,\\ 
b_{ij}+\dfrac{|b_{ik}|b_{kj} + b_{ik}|b_{kj}|}{2}, & \text{otherwise}.\\
\end{cases}
\end{equation}
In terms of notation, we may write $(\textnormal{\textbf{x}}',B')$ as $\mu_k(\textnormal{\textbf{x}},B)$.
\end{definition}

\begin{remark}
In many situations, it is convenient to replace the skew-symmetrizable matrix appearing in the previous definition with a \textit{quiver} $Q$, namely a directed graph consisting of $n$ \textit{mutable} vertices and $(m-n)$ \textit{frozen} vertices. We require that $Q$ has no loops, no oriented $2$-cycles, and no arrows between frozen vertices. Associated to such a quiver is an $m \times n$ skew-symmetrizable matrix $\widetilde{B}(Q)$ whose entries are given by
$$b_{ij}=\begin {cases} \# (i\rightarrow j), & \text{if}\ i > j,\\ 0, & \text{if}\ i=j,\\ - \# (i \leftarrow j), & \text{if}\ i < j; \end{cases}$$
where $\#(i\rightarrow j)$ denotes the number of arrows from $i$ to $j$, while $\#(i\leftarrow j)$ denotes the number of arrows from $j$ to $i$.

One can mutate directly at a mutable vertex $k$ in a quiver $Q$ to obtain a new quiver $\mu_k(Q)$ as follows:

\begin{enumerate}

\item For every oriented path $i \rightarrow k \rightarrow j$, add an arrow $i \rightarrow j$.

\item Reverse all arrows connected to the vertex $k$.

\item Remove all oriented $2$-cycles that arise after the previous steps, repeating this process until no such cycles remain.

\end{enumerate}

The resulting quiver $\mu_k(Q)$ again has no loops and no oriented $2$-cycles. It is not hard to verify that the mutation on a quiver and the mutation on the matrix associated to a quiver are equivalent.
\end{remark}

\begin{remark}
One can easily verify that $\mu_k$ is an involution, that is, $$\mu_k(\mu_k(\textnormal{\textbf{x}},B))=(\textnormal{\textbf{x}},B).$$
\end{remark}

\begin{definition}
Let $(\textnormal{\textbf{x}},B)$ be a seed. The cluster algebra $\mathcal{A}$ attached to the given seed is the subring of $\CC(x_1,...,x_n,...,x_m)$ generated by the frozen variables and all the mutable variables, that is, the mutable variables of the original seed or any seed induced by any sequence of mutations at any indices. In this context, we may denote $\mathcal{A}$ by $\mathcal{A}(\textnormal{\textbf{x}},B)$ and call $(\textnormal{\textbf{x}},B)$ the \textit{initial seed}.
\end{definition}

\begin{remark}
It is straightforward to see that the cluster algebra induced by some seed is exactly the same cluster algebra induced by any seed mutation or any sequence of mutations of the initial one.
\end{remark}

\begin{definition}
The \textit{rank} of a cluster algebra $\mathcal{A}$ is the number of mutable variables of its initial seed or any mutation of it. If the number of all the obtained seeds is finite, we say that $\mathcal{A}$ is \textit{of finite type}. Otherwise, it is called \textit{of infinite type}.
\end{definition}

\begin{theorem}\label{finite type}
A cluster algebra $\mathcal{A}$ is of finite type if and only if the Cartan counterpart of one of its seeds is a Cartan matrix of finite type, that is, of type $A_n$, $B_n$, $C_n$, $D_n$, $E_6$, $E_7$, $E_8$, $F_4$, or $G_2$.
\end{theorem}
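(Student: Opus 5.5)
This is the Fomin--Zelevinsky finite type classification \cite{FZ2}. In the paper it is quoted rather than proved, so my plan follows the original argument. I would first define the \emph{Cartan counterpart} $A(B)=(a_{ij})$ of the principal $n\times n$ part of $B$ by $a_{ii}=2$ and $a_{ij}=-|b_{ij}|$ for $i\neq j$. Since only the principal part governs mutation of mutable variables, frozen variables can be ignored. The coefficient-free case then reduces to principal coefficients via standard results on independence of coefficients for finite type.

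\textbf{Sufficiency.} Suppose some seed has $A(B)$ a finite type Cartan matrix. Since mutation-equivalence is preserved, I would choose a convenient seed in which the diagram is bipartite, i.e. an alternating orientation of the Dynkin diagram. Such a seed exists because the Dynkin diagram is a tree.

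For such a seed I would build the bipartite dynamics $\mu_+=\prod_{i\in I_+}\mu_i$ and $\mu_-=\prod_{i\in I_-}\mu_i$. I would then prove that the cluster variables are in bijection with the almost positive roots $\Phi_{\geq -1}$ via denominator vectors. This uses the piecewise-linear involutions $\tau_\pm$ on the root lattice, and the fact that $\langle\tau_-\tau_+\rangle$ acts on $\Phi_{\geq -1}$ with finite orbits, each meeting $-\Pi$; the latter comes from the Coxeter element having finite order $h$. Clusters then correspond to maximal compatible subsets, and finiteness of $\Phi_{\geq-1}$ gives finitely many seeds.

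\textbf{Necessity.} Assume $\mathcal{A}$ is of finite type. I would first show that every seed's matrix is \emph{2-finite}, meaning $|b_{ij}b_{ji}|\leq 3$ throughout the mutation class. This follows from rank $2$: restricting to mutations at $\{i,j\}$ gives a rank-2 cluster algebra (after freezing the others), and for $b_{ij}b_{ji}\le -4$ its exchange recurrence produces infinitely many distinct variables.

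Next I would classify 2-finite matrices and show that their mutation class contains a seed whose Cartan counterpart is of finite type. The approach is to show that a 2-finite diagram cannot contain a cycle that fails to be cyclically oriented in the right way. Then I would show that any Cartan counterpart whose diagram is an extended Dynkin or other minimal infinite diagram can be mutated to produce a product $|b_{ij}b_{ji}|\ge4$. Passing to subdiagrams, which are preserved under restricted mutation, completes the argument.

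\textbf{Main obstacle.} I expect the main obstacle to be this last combinatorial classification of 2-finite diagrams. It requires a long case analysis of diagram mutations, for example around oriented cycles and the $\tilde D$ and $\tilde E$ families. I would organize it by induction on the number of vertices, using that every connected 2-finite diagram has a vertex whose removal leaves a connected 2-finite diagram.
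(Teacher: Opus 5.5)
Your outline is a faithful sketch of the Fomin--Zelevinsky proof in \cite{FZ2}: the rank-two reduction to 2-finiteness, the classification of 2-finite diagrams by induction on the number of vertices, and sufficiency via the bipartite belt and almost positive roots. That is exactly what the paper relies on, since it gives no argument of its own beyond citing \cite{FZ2} and \cite{FWZ2}, so your approach agrees with the paper's. The one point to tighten is the frozen variables: for sufficiency, get finiteness for arbitrary coefficients from the fact that the principal-coefficient exchange graph covers every other one (or run the denominator argument with arbitrary coefficients, as \cite{FZ2} does), not from ``independence of coefficients in finite type,'' which is itself a consequence of the classification; your rank-two argument for necessity already works with any frozen variables.
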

\begin{proof}
See \cite{FWZ2} or \cite{FZ2}.
\end{proof}

\section{Flag varieties and GLS tilde map}
This section introduces the required results about flag varieties and related topics. For more details, the reader is referred to \cite{GLS1, GLS2, GLS3} or \cite{GSV}.\\

Let $G$ be a simply-connected semisimple complex algebraic group and let $I$ be its Dynkin diagram vertex set. Let $J$ and $K$ be a pair of disjoint sets such that $J\sqcup K=I$.

\begin{definition}
A subgroup $P$ of $G$ is called \textit{parabolic} if it is closed and contains a Borel subgroup.
\end{definition}

\begin{example}
In this example, we give a couple of important parabolic subgroups that will be used throughout the whole paper.
\begin{enumerate}
    \item Any Borel subgroup $B$ is parabolic.
    \item Let $B,B^-$ be a pair of Borel subgroups and denote their unipotent radicals by $N,N^-$, respectively. For each of $N,N^-$, there are one-parameter root subgroups indexed by $I$ that form a set of distinguished generators. Denote by $x_i(t)$ $(i\in I, t \in \CC)$ and $y_i(t)$ the ones of $N$ and $N^-$, respectively. The subgroup $P_K$ generated by $B$ and $y_k(t)$ is parabolic and called the \textit{standard parabolic subgroup} associated to $B$. Analogously, the subgroup $P_K^-$ generated by $B^-$ abd $x_k(t)$ is parabolic.
\end{enumerate}
\end{example}

\begin{definition}
A \textit{(partial) flag variety} is a quotient $G/P$, where $P$ is a parabolic subgroup.
\end{definition}

\begin{remark}
Each parabolic subgroup is conjugate to a unique standard parabolic subgroup. Consequently, the study of flag varieties is reduced to the ones of the form $G/P_K^-$, which will be our main focus on this paper.
\end{remark}

\begin{remark}
For a dominant weight $\lambda$, denote the corresponding finite-dimensional irreducible $G$-module of $G$ with highest weight $\lambda$ by $L(\lambda)$. Let $L(\lambda)^*$ denote the right $G$-module obtained by twisting the action of $G$. Naturally, the flag variety $G/P_K^-$ can be embedded as a closed subset of the product of projective spaces $$\prod_{j \in J}\mathbb P \big(L(\varpi_j)^*\big),$$
where $\varpi_j$ is a fundamental weight of $G$.
\end{remark}

\begin{remark}
Let $\Pi_J$ be the monoid of integral dominant weights that is isomorphic to $\NN^J$ and defined by
$$\Pi_J=\bigg\{ \lambda=\sum_{j \in J} a_j \varpi_j \mid a_j \in \NN \bigg\}.$$
The coordinate ring $\CC[G/P_K^-]$ is $\Pi_J$-graded. Indeed,
$$\CC[G/P_K^-]=\bigoplus_{\lambda \in \Pi_J} L(\lambda).$$
Consequently, $\CC[G/P_K^-]$ can be identified with the subalgebra $\CC[G/N^-]$ generated by the homogeneous elements of degree $\varpi_j$, where $j \in J.$
\end{remark}

\begin{remark}[Generalized minors]
If $G$ is of type $A$, then a \textit{(flag) minor} is a regular irreducible function on $\mathbb{C}[G]$ defined as follows: for any subset $X \subset [1,n] := \{1, \dots, n\}$ and any matrix $x \in G$, the minor $\Delta_X(x)$ is defined to be the determinant of the submatrix of $x$ whose rows are indexed by $X$ and whose columns are the first $|X|$ columns. 

Fomin and Zelevinsky generalized this notion in \cite{FZ0} to the concept of a \textit{(generalized) minor} $\Delta_{u\varpi_j, w\varpi_j}$, where $u, w$ belong to the Weyl group $W$. In type $A$, the notions of flag minors and generalized minors coincide. However, the definition of generalized minors extends to arbitrary types.
\end{remark}

\begin{remark}
For each simple reflection $s_i \in W$, define $\overline{s_i} := \exp(f_i)\exp(e_i)\exp(f_i)$. If $w = s_{i_1} \cdots s_{i_r}$ is a reduced expression of $w \in W$, then set $\overline{w} := \overline{s_{i_1}} \cdots \overline{s_{i_r}}$. 

Let $G_0 = N^-HN$ denote the open subset of $G$ consisting of elements admitting a Gaussian decomposition. That is, each $x \in G_0$ can be uniquely written as
\[
x = [x]_{-} [x]_0 [x]_+,
\]
where $[x]_{-} \in N^-$, $[x]_0 \in H$, and $[x]_+ \in N$.\\
Let $V_i^+$ be the irreducible representation of $G$ with highest weight $\varpi_i$ and highest weight vector $v_i^+$. For any $h \in H$, the vector $v_i^+$ is an eigenvector: 
\[
h v_i^+ = [h]^{\varpi_i} v_i^+ \quad \text{for some } [h]^{\varpi_i} \in \mathbb{C} \setminus \{0\}.
\] 
This leads to the following definition introduced by Fomin and Zelevinsky in \cite{FZ0}.
\end{remark}

\begin{definition}
For $u, v \in W$ and $i \in I$, the \textit{(generalized) minor} is the regular function on $G$ defined by
\[
\Delta_{u\varpi_i, v\varpi_i}(x) := [\overline{u}^{-1} x \, \overline{v}]_0^{\varpi_i}.
\]
\end{definition}

\begin{remark}
For each $w \in W$, the multi-degree of the minors of the form $\Delta_{\varpi_j,w(\varpi_j)}$ in $\CC[G/P_K^-]$ is $\varpi_j$.
\end{remark}

\begin{remark}
The minors can be used to define the affine coordinate ring of the unipotent radical cell $N_K$ of $P_K$ in the following two equivalent ways
\begin{enumerate}
    \item the homogeneous elements of degree 0 of the localization of the homogeneous coordinate ring of the corresponding flag variety by the elements $\Delta_{\varpi_j,\varpi_j}$ where $j \in J$; or
    \item the quotient of $\CC[G/P_K^-]$ by the ideal generated by the elements $\Delta_{\varpi_j, \varpi_j} - 1$ for all $j \in J$.\\
\end{enumerate}
In symbols, we have that
\begin{equation}\label{localization}
  \CC[N_K]=\left\{\dfrac{f}{\prod_{j \in J} \Delta_{\varpi_j,\varpi_j}^{a_j}} \mid f\in L \bigg(\sum_{j\in J} a_j \varpi_j \bigg) \right\},
  \end{equation}
or equivalently,
\begin{equation}\label{quotient}
  \mathbb C [N_{K}]=\bigslant{{ \mathbb C [G/P_{K}^{-}]}}{(\Delta_{\varpi_j,\varpi_j}-1)}_{j \in J}.
  \end{equation}
The second identification gives a natural canonical projection
$$\proj_J: \mathbb C [G/P_K^-] \to \mathbb C [N_K].$$
The restriction of the projection map to each homogeneous piece $L(\lambda),$ for $\lambda \in \Pi_J$, gives an injection $L(\lambda) \xhookrightarrow{} \mathbb C[N_K]$.
\end{remark}

\begin{remark}
There is a standard partial order $\preceq$ on the set $\Pi_J$ defined as follows:
\[
\lambda \preceq \mu \quad \Longleftrightarrow \quad \mu - \lambda \in \mathbb{N} \{\varpi_j \mid j \in J\},
\]
that is, the difference $\mu - \lambda$ is an $\mathbb{N}$-linear combination of the fundamental weights $\varpi_j$ for $j \in J$.
\end{remark}

\begin{lemma}[GLS tilde map] \label{GLS tilde map}
There is a map $\text{\test{\textnormal{ }\cdot \textnormal{ }}}:\CC[N_K] \to \CC[G/P_K^-]$ that lifts each $f \in \CC[N_K]$ uniquely to a homogeneous element $\tilde{f} \in \CC[G/P_K^-]$ in which $\proj(\tilde{f})=f$ and the multi-degree of it is minimal up to $\preceq$.
\end{lemma}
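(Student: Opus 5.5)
The plan is to work with the localization description \eqref{localization} of $\CC[N_K]$, together with the fact that $\proj_J$ restricted to each homogeneous piece $L(\lambda)$ is injective. Fix $f\in\CC[N_K]$ and consider the set
\[
S(f)=\{\lambda\in\Pi_J \mid f\in \proj_J(L(\lambda))\}.
\]
By \eqref{localization}, $S(f)$ is nonempty: $f=g/\prod_j\Delta_{\varpi_j,\varpi_j}^{a_j}$ with $g\in L(\sum a_j\varpi_j)$, so $\lambda=\sum a_j\varpi_j\in S(f)$. The aim is to show that $S(f)$ has a unique $\preceq$-minimal element $\lambda_f$. Then $\tilde f$ is defined as the unique preimage of $f$ in $L(\lambda_f)$, and uniqueness of that preimage follows from injectivity of $\proj_J|_{L(\lambda_f)}$.

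First I would check that $S(f)$ is an upper set. If $\lambda\in S(f)$ with lift $g\in L(\lambda)$, then $g\cdot\Delta_{\varpi_j,\varpi_j}\in L(\lambda+\varpi_j)$ (the grading is multiplicative) and has the same image, since $\proj_J(\Delta_{\varpi_j,\varpi_j})=1$. So $\lambda+\varpi_j\in S(f)$.

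The key step, and the main obstacle, is to show that $S(f)$ is closed under componentwise minimum. That is, if $\lambda=\sum a_j\varpi_j$ and $\mu=\sum b_j\varpi_j$ both lie in $S(f)$, then so does $\nu=\sum\min(a_j,b_j)\varpi_j$. Set $\rho=\max(\lambda,\mu)$. Injectivity on $L(\rho)$ gives
\[
g\prod_j\Delta_{\varpi_j,\varpi_j}^{\,c_j-a_j}=h\prod_j\Delta_{\varpi_j,\varpi_j}^{\,c_j-b_j}
\]
in $\CC[G/P_K^-]$, where $c_j=\max(a_j,b_j)$. Cancelling the common factors, which is allowed because $\CC[G/N^-]$ is a domain, we get
\[
g\prod_{j\in A}\Delta_{\varpi_j,\varpi_j}^{e_j}=h\prod_{j\in B}\Delta_{\varpi_j,\varpi_j}^{e_j}
\]
with $A$ and $B$ disjoint. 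Since the $\Delta_{\varpi_j,\varpi_j}$ are irreducible (and pairwise non-associate) in the factorial ring $\CC[G/N^-]$ for simply-connected $G$, this is the input I would cite from the flag-variety literature. It follows that $\prod_{j\in A}\Delta_{\varpi_j,\varpi_j}^{e_j}$ divides $h$. Write $h=k\prod_{j\in A}\Delta_{\varpi_j,\varpi_j}^{e_j}$. Then $k$ is homogeneous of degree $\nu$ and $\proj_J(k)=f$, so $\nu\in S(f)$.

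Since $\Pi_J\cong\NN^J$ is well-founded, $S(f)$ has minimal elements. Closure under minimum then forces the minimum to be unique, namely $\lambda_f$. This gives existence and uniqueness of $\tilde f$ with minimal multi-degree, and completes the argument. The only nonformal input is the factoriality and irreducibility statement; if it is unavailable in the required generality, I would instead follow \cite{GLS3}, checking directly that $\Delta_{\varpi_j,\varpi_j}$ is prime via the vanishing locus on $G/N^-$.
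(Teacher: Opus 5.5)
Your argument is correct, and it takes a different route from the one the paper relies on. The paper gives no proof of this lemma; it quotes it from GLS, whose construction rests on a representation-theoretic input. There each $L(\lambda)$, with $\lambda=\sum_{j\in J}a_j\varpi_j$, is realized inside $\CC[N_K]\subset\CC[N]$ as the joint kernel of the operators $e_j^{\,a_j+1}$ ($j\in J$), which act by derivations. With that realization, the intersection property $\proj_J(L(\lambda))\cap\proj_J(L(\mu))=\proj_J(L(\min(\lambda,\mu)))$, which you work to establish, is immediate. The minimal degree is also explicit: $a_j(f)=\max\{m : e_j^m f\neq 0\}$. This is the $a_j(f)$ of Lemma~\ref{summation lift}, and it is what makes the degrees of lifted cluster variables computable. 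You replace this with a purely ring-theoretic input, namely the primality of the $\Delta_{\varpi_j,\varpi_j}$ in the graded domain $\CC[G/N^-]$. Your route gives no formula for $\deg\tilde f$. In exchange, it characterizes $\tilde f$ as the unique homogeneous lift of $f$ divisible by no $\Delta_{\varpi_j,\varpi_j}$, and the multiplicativity $\widetilde{fg}=\tilde f\,\tilde g$ of Lemma~\ref{summation lift} then follows at once from primality. Two small points. First, since $c_j=\max(a_j,b_j)$, one of $c_j-a_j$ and $c_j-b_j$ is already zero, so there is nothing to cancel. Second, your fallback suffices on its own and does not need factoriality of $\CC[G/N^-]$. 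A degree comparison shows that $\Delta_{\varpi_j,\varpi_j}$ cannot divide $\Delta_{\varpi_{j'},\varpi_{j'}}$ for $j\neq j'$. Together with primality of each $\Delta_{\varpi_j,\varpi_j}$, induction on $\sum e_j$ then gives $\prod_{j\in A}\Delta_{\varpi_j,\varpi_j}^{e_j}\mid h$. The homogeneity of the quotient $k$ is automatic, since $\CC[G/N^-]$ is a domain graded by a torsion-free group.
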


\begin{lemma}\label{summation lift}
The map tilde $\text{\test{\textnormal{ }\cdot \textnormal{ }}}$ commutes with the usual multiplication of $\CC[N_K]$, that is, for $f,g \in \CC[N_K]$, we have $\widetilde{f \cdot g}=\widetilde{f} \cdot \widetilde{g}$. Moreover, if $a_j(f+g)= \max \{a_j(f), a_j(g) \}$ for all $j \in J$, then
$$\widetilde{f+g}=\mu \widetilde{f} + \nu \widetilde{g},$$
where $\mu$ and $\nu$ are relatively prime monomials in the variables $\Delta_{\varpi_j, \varpi_j}$, $(j \in J)$.
\end{lemma}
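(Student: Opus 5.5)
The plan is to make the lift explicit using the localization description \eqref{localization} of $\CC[N_K]$. For $f \in \CC[N_K]$ and each $j \in J$, let $a_j(f)$ be the least exponent such that $f$ can be written as $F/\prod_{j\in J}\Delta_{\varpi_j,\varpi_j}^{a_j(f)}$ with $F\in L\big(\sum_j a_j(f)\varpi_j\big)$. By the construction in Lemma \ref{GLS tilde map}, $\widetilde f = F$, and $a(f)=(a_j(f))_{j\in J}$ is the minimal degree with respect to $\preceq$.

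Three facts will be used repeatedly.
\begin{enumerate}
\item Since $\Delta_{\varpi_j,\varpi_j}$ is a highest weight vector, multiplication by it is injective on $\CC[G/P_K^-]$ and maps $L(\lambda)$ into $L(\lambda+\varpi_j)$.
\item Each restriction $L(\lambda)\hookrightarrow \CC[N_K]$ is injective.
\item If a lift $F'$ of $f$ lies in degree $\lambda\succeq a(f)$, then $F'=\prod_j\Delta_{\varpi_j,\varpi_j}^{\lambda_j-a_j(f)}\widetilde f$. Both sides lie in $L(\lambda)$ and project to $f$, so they agree by injectivity. In particular, every lift is a monomial multiple of the minimal one.
\end{enumerate}

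\textbf{Multiplicativity.} The product $\widetilde f\widetilde g$ lies in $L(a(f)+a(g))$, since the product of sections in $\bigoplus_\lambda L(\lambda)$ respects degree. It projects to $fg$, so by fact (3) we get $\widetilde f\widetilde g = \prod_j\Delta_{\varpi_j,\varpi_j}^{c_j}\widetilde{fg}$ with $c_j = a_j(f)+a_j(g)-a_j(fg)\ge 0$. We must show $c=0$, i.e. $a(fg)=a(f)+a(g)$.

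Here I would use that $\CC[G/N^-]$ is a domain in which the $\Delta_{\varpi_j,\varpi_j}$ are prime; this holds because each cuts out an irreducible Schubert divisor, and it is the standard fact behind GLS's proof. Minimality of $a_j(f)$ means exactly that $\Delta_{\varpi_j,\varpi_j}\nmid \widetilde f$. Indeed, if $\widetilde f=\Delta_{\varpi_j,\varpi_j}h$, then $h$ would be a lift of smaller degree. If some $c_j>0$, then $\Delta_{\varpi_j,\varpi_j}$ divides $\widetilde f\widetilde g$, so by primality it divides $\widetilde f$ or $\widetilde g$, a contradiction. I expect this primality/divisibility step to be the main obstacle. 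If it is not available, I would instead invoke that $a_j(f)$ equals the order of vanishing of $f$ along the boundary divisor $\{\Delta_{\varpi_j,\varpi_j}=0\}$, a valuation. Valuations are additive on products, which again gives $a(fg)=a(f)+a(g)$.

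\textbf{Sums.} Let $\lambda=a(f+g)$. By hypothesis $\lambda_j=\max(a_j(f),a_j(g))$, so $\lambda\succeq a(f)$ and $\lambda\succeq a(g)$. Set
\[
\mu=\prod_j\Delta_{\varpi_j,\varpi_j}^{\lambda_j-a_j(f)}, \qquad \nu=\prod_j\Delta_{\varpi_j,\varpi_j}^{\lambda_j-a_j(g)}.
\]
Then $\mu\widetilde f+\nu\widetilde g$ lies in $L(\lambda)$ and projects to $f+g$, because $\proj(\Delta_{\varpi_j,\varpi_j})=1$. By fact (2) it equals $\widetilde{f+g}$. Finally, $\mu$ and $\nu$ are coprime: for each $j$ one of the exponents $\lambda_j-a_j(f)$, $\lambda_j-a_j(g)$ vanishes, since $\lambda_j$ is the maximum of the two.
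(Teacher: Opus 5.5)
The paper gives no proof of this lemma; it is quoted as background from the GLS framework, so there is no in-paper argument to match against. Your proof is correct.

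\textbf{Sums.} This part is exactly right: both $\mu\widetilde f+\nu\widetilde g$ and $\widetilde{f+g}$ lie in $L(a(f+g))$ and project to $f+g$, so injectivity forces them to be equal.

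\textbf{Multiplicativity.} This reduces, as you say, to $a(fg)=a(f)+a(g)$, and the primality argument delivers it. Three points should be tightened.
\begin{enumerate}
\item ``Cuts out an irreducible divisor'' only shows that $\Delta_{\varpi_j,\varpi_j}$ is a prime \emph{power} in a factorial ring. A clean justification has two parts. First, $\CC[G/N^-]$ is factorial: $\CC[G]$ is factorial for simply connected $G$, and invariants of the connected, character-free group $N^-$ stay factorial. Second, $\Delta_{\varpi_j,\varpi_j}$ is irreducible. Factors of a homogeneous element in a domain graded by a torsion-free group are homogeneous, and $\varpi_j$ is indecomposable in the monoid of dominant weights, so one factor has degree $0$ and is a scalar.
\item When you write $\widetilde f=\Delta_{\varpi_j,\varpi_j}h$, say explicitly that $h$ is homogeneous of degree $a(f)-\varpi_j\in\Pi_J$. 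That is what makes $h$ a lift of smaller degree.
\item Your fallback is not an independent argument. The relevant quantity is the order of \emph{pole} of $f$ along $\{\Delta_{\varpi_j,\varpi_j}=0\}$, not the order of vanishing. Identifying it with the minimal exponent again needs this divisor to be reduced (together with normality), which is the same input as before.
\end{enumerate}

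\textbf{An alternative route.} A standard approach avoids factoriality entirely and works inside $\CC[N_K]$. The image of $L(\lambda)$ there is the joint kernel of the operators $e_j^{\lambda_j+1}$ ($j\in J$), where $e_j$ acts on $\CC[N]$ as a locally nilpotent derivation. Hence $a_j(f)=\max\{k : e_j^k f\neq 0\}$. The Leibniz rule gives $e_j^{m+n}(fg)=\binom{m+n}{m}e_j^m f\, e_j^n g\neq 0$ and $e_j^{m+n+1}(fg)=0$ for $m=a_j(f)$, $n=a_j(g)$, which is additivity.

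This route also yields, for free, two facts you use or assume:
\begin{itemize}
\item the existence of a least lift degree, which you take from Lemma~\ref{GLS tilde map} when you assert $c_j\ge 0$;
\item the inequality $a_j(f+g)\le\max\{a_j(f),a_j(g)\}$ in general, so the hypothesis of the second part only excludes cancellation.
\end{itemize}
Your approach is more geometric and needs no knowledge of how $L(\lambda)$ sits inside $\CC[N]$. The price is the factoriality/primality input.
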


\noindent Consequently, as indicated in \cite{Kadhem2}, this allows us to rewrite equation (\ref{localization}) as
\begin{equation}\label{localization 2}
  \CC[N_K]=\left\{ \dfrac{f}{\prod_{j \in J} \Delta_{\varpi_j,\varpi_j}^{a_j}} \mid f\in L \bigg(\sum_{j\in J} a_j \varpi_j \bigg) \textnormal{ and $a_j$ is minimal} \right\}.
  \end{equation}

\section{Cluster structures}
This section takes a look at constructions of cluster algebra structures on $\CC[N]$ and $\CC[G/P]$, where $N$ is a unipotent Suchbert cell and $G/P$ is its corresponding partial flag variety. For more details, the reader is referred to \cite{GLS1, GLS2, GLS3}, \cite{GY}, and \cite{Kadhem, Kadhem2}.

\begin{remark}
It has been proved in \cite{GLS1, GLS2, GLS3, GLS4} by Gei{\ss}, Leclerc and Schr{\"{o}}er that $\CC[N_K]$ admits a cluster algebra structure for the simply-laced groups. However, Goodearl and Yakimov in \cite{GY} were able to give a general construction that works for any arbitrary type of group $G$.
\end{remark}

\begin{definition}
For a word $w=s_{i_1}...s_{i_n} \in W$ define the following
\begin{align*}
p(k) &:=\begin{cases}
\textnormal{max}\{j < k \textnormal{ }| \textnormal{ } i_j=i_k \},& \text{if such } j \text{ exists;}\\
- \infty, & \text{otherwise.}
\end{cases}\\
s(k)&:=\begin{cases}
\textnormal{min}\{j > k \textnormal{ }| \textnormal{ } i_j=i_k \},& \text{if such } j \text{ exists;}\\
\infty, & \text{otherwise.}
\end{cases}\\
S(w)&:=\{i \in I \mid s_i \leq w \}=\{i \in I \mid i=i_k \textnormal{ for some } k \in [1,m] \}.
\end{align*}
\end{definition}

These definitions are critical for the following theorem:

\begin{theorem}\label{seed construction}
There is a canonical cluster algebra structure on $\CC[N_K]$ such that its initial seed is given by the following data:
\begin{itemize}
    \item The initial seed variables are $D_{\varpi_{i_k},w_{\leq k} \varpi_{i_k}}=\proj_J\big( {\Delta_{\varpi_{i_k},w_{\leq k} \varpi_{i_k}}}\big)$.
    \item The variable is frozen if and only if it is indexed by a $k$ such that $s(k)=\infty$.
    \item The initial exchange matrix $\widetilde{B}^w$ is of size $m \times (m-|S(w)|)$, where $m$ is the size of the subword $w_K$ that generates $N_K$. The $j \times k$ entry is given by
$$(\widetilde{B}^w)_{jk} = \begin{cases}
1, & \text{if } j=p(k), \\
-1, & \text{if } j=s(k),\\
a_{i_j i_k}, & \text{if } j<k<s(j)<s(k),\\
-a_{i_j i_k}, & \text{if } k<j<s(k)<s(j),\\
0, & \text{otherwise,}
\end{cases}$$
where $a_{i_j i_k}$ is the $i_j \times i_k$ entry of the Cartan matrix of the same type of $G$.
\end{itemize}
\end{theorem}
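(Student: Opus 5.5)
This statement is essentially the Goodearl--Yakimov theorem specialized to $\CC[N_K]$, and I would derive it from their general machinery for symmetric CGL (Cauchon--Goodearl--Lenagan) extensions rather than prove it from scratch.

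\emph{Step 1: identify $\CC[N_K]$ with a quantum Schubert cell at $q=1$.} Let $w_K$ be the element of $W$ attached to the parabolic, namely $w_0 w_0^K$, where $w_0^K$ is the longest element of the parabolic subgroup generated by $K$. By the identification (\ref{quotient}), $\CC[N_K]$ is the coordinate ring of the Schubert cell attached to $w_K$. I would fix a reduced word $w_K=s_{i_1}\cdots s_{i_m}$ and realize $\CC[N_K]$ as the specialization at $q=1$ of the De Concini--Kac--Procesi algebra $U_q^-[w_K]$. That algebra is an iterated Ore extension whose generators are the root vectors $F_{\beta_1},\dots,F_{\beta_m}$ attached to the roots $\beta_k=s_{i_1}\cdots s_{i_{k-1}}(\alpha_{i_k})$. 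Goodearl and Yakimov show it is a symmetric CGL extension, and its specialization is a Poisson CGL extension. By their main theorem \cite{GY}, it carries a cluster algebra structure. The initial cluster consists of the homogeneous prime elements $y_k$ of the intermediate subalgebras generated by $F_{\beta_1},\dots,F_{\beta_k}$.

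\emph{Step 2: identify the initial seed variables.} I would show that $y_k$ equals, up to a scalar, the minor $\Delta_{\varpi_{i_k},w_{\le k}\varpi_{i_k}}$ restricted to $N_K$. Applying $\proj_J$ then gives the variables $D_{\varpi_{i_k},w_{\le k}\varpi_{i_k}}$. This works because both elements are normal, have the same weight $\varpi_{i_k}-w_{\le k}\varpi_{i_k}$, and lie in the $k$-th subalgebra but not in the $(k-1)$-st. Uniqueness of such homogeneous primes in a CGL extension then forces them to agree. In the GY framework, $y_k$ is frozen exactly when it stays prime in the full algebra, that is, when no later index has the same color. This is the condition $s(k)=\infty$. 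There is one such frozen variable for each color in $S(w)$, which gives the count $m-|S(w)|$ of mutable variables.

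\emph{Step 3: compute the exchange matrix.} This is the main obstacle. Goodearl and Yakimov characterize the exchange matrix implicitly: it is the unique matrix compatible with the Poisson brackets $\{y_j,y_k\}=\lambda_{jk}y_jy_k$, and the mutation at each mutable $k$ must produce the element $y_k'$ obtained from the CGL ``swap'' of the words. To verify the explicit formula, I would proceed in two steps. First, compute $\lambda_{jk}=\langle \varpi_{i_j}-w_{\le j}\varpi_{i_j},\,\varpi_{i_k}+w_{\le k}\varpi_{i_k}\rangle$ from the standard Poisson structure. Then check that the proposed $\widetilde B^w$ satisfies the compatibility $\widetilde B^{w\,T}\Lambda=$ (diagonal). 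The case split $j=p(k)$, $j=s(k)$, and the interlacing conditions $j<k<s(j)<s(k)$ or $k<j<s(k)<s(j)$ reflects exactly which terms survive the telescoping of $w_{\le k}\varpi$. Finally, I would confirm that the exchange relations are the generalized determinantal identities of Fomin and Zelevinsky \cite{FZ0}:
\[
\Delta_{\varpi_i,w_{\le k}\varpi_i}\,\Delta_{\varpi_i,w_{\le p(k)}s_i\varpi_i}=\Delta_{\varpi_i,w_{\le p(k)}\varpi_i}\,\Delta_{\varpi_i,w_{\le k}s_i\varpi_i}+\prod_{j\ne i}\Delta_{\varpi_j,\ast}^{-a_{ji}},
\]
and that these have exactly the shape the matrix predicts. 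For the algebra to be equal to $\CC[N_K]$, not merely contained in it, I would cite the equality of the cluster and upper cluster algebras proved in \cite{GY}. Canonicity, meaning independence of the reduced word, follows there from the fact that different reduced words give mutation-equivalent seeds.
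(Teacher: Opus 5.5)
Your overall route is the paper's. Its proof is only an appeal to the Goodearl--Yakimov theorem for (Poisson) CGL extensions, specialized to the Schubert cell of $w_K$, with the details delegated to Theorem 4.8 of \cite{Kadhem}. Your Steps 1--2 sketch exactly that: the primes $y_k$ of the intermediate subalgebras, their identification with the minors by weight, and frozen $\iff s(k)=\infty$.

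The step that fails is the displayed identity in Step 3. With $i=i_k$ you have $w_{\le k}s_i=w_{\le k-1}$ and $w_{\le p(k)}s_i=w_{\le p(k)-1}$, and the omitted letters fix $\varpi_i$. Hence $w_{\le k}s_i\varpi_i=w_{\le p(k)}\varpi_i$ and $w_{\le p(k)}s_i\varpi_i=w_{\le p(p(k))}\varpi_i$, so your display reads $y_k\,y_{p(p(k))}=y_{p(k)}^2+\prod_{j\ne i}\Delta_{\varpi_j,\ast}^{-a_{ji}}$ (with $y_{-\infty}=1$).
\begin{itemize}
\item It contains no new cluster variable, so it cannot be the exchange relation at $k$.
\item It is false in general. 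For $w_0=s_1s_2s_3s_1s_2s_1$ in type $A_3$ and $k=4$ it asserts $y_4=y_1^2+\Delta_{\varpi_2,\ast}$, but $y_4$ has weight $\alpha_1+\alpha_2$ and $y_1^2$ has weight $2\alpha_1$.
\item The actual Fomin--Zelevinsky identity pairs $\Delta_{u\varpi_i,v\varpi_i}$ with $\Delta_{us_i\varpi_i,vs_i\varpi_i}$, where $\ell(us_i)>\ell(u)$, so its minors cannot all have left index $\varpi_i$.
\end{itemize}

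You should drop this check. GY's theorem already provides the exchange relations, so what remains is only to show that GY's exchange matrix equals $\widetilde B^w$. Your compatibility computation does not do this by itself. If $\widetilde B^T\Lambda=[D\;0]$, then every $v\in\ker\Lambda$ satisfies $Dv_{\mathrm{mut}}=\widetilde B^T\Lambda v=0$. So compatibility pins down only the mutable rows, and $\Lambda$ is singular, for example, whenever $m$ is odd. The frozen rows need an extra input: either the torus-weight condition $\sum_j(\widetilde B^w)_{jk}\,(\varpi_{i_j}-w_{\le j}\varpi_{i_j})=0$ for each mutable $k$, which is part of GY's characterization of $\widetilde B$, or GY's explicit identification of their matrix for Schubert cells.

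There is a related normalization issue. You match $y_k$ with the minor only up to a scalar, and rescaling the $y_j$ changes the relative coefficient of the two monomials in each exchange relation. So you also need GY's result that the minors themselves are the correctly normalized cluster variables. These two inputs are exactly what the paper's citation of GY and of Theorem 4.8 of \cite{Kadhem} supplies.
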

\begin{proof}
This is a consequence of the results of the work of GY in \cite{GY2, GY, GY3}. A formal proof can be found in Theorem 4.8 in \cite{Kadhem}.
\end{proof}

\begin{corollary}
The quotient algebra $\bigslant{{ \mathbb C [G/P_{K}^{-}]}}{(\Delta_{\varpi_j,\varpi_j}-1)}_{j \in J}$ admits a canonical cluster algebra structure.
\end{corollary}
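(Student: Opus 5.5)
The corollary follows by transporting the cluster structure of Theorem~\ref{seed construction} along the identification (\ref{quotient}). I would first recall that (\ref{quotient}) gives an isomorphism of $\CC$-algebras
$$\bigslant{{ \mathbb C [G/P_{K}^{-}]}}{(\Delta_{\varpi_j,\varpi_j}-1)}_{j \in J} \;\cong\; \CC[N_K],$$
induced by the canonical projection $\proj_J$. Being a cluster algebra is a structural property of a ring together with a chosen seed inside an ambient field. It is therefore preserved under ring isomorphisms, provided the isomorphism extends to the fraction fields. Here it does, because both rings are domains: $\CC[N_K]$ is the coordinate ring of an affine space.

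The key steps are as follows.
\begin{enumerate}
\item Fix the reduced word $w_K$ generating $N_K$. Take the initial seed from Theorem~\ref{seed construction}: the cluster variables $D_{\varpi_{i_k},w_{\leq k}\varpi_{i_k}}$, the frozen/mutable partition determined by $s(k)=\infty$, and the exchange matrix $\widetilde{B}^w$.
\item Pull this seed back along the isomorphism $\phi$ of (\ref{quotient}). Concretely, take as initial cluster variables in the quotient the classes of the minors $\Delta_{\varpi_{i_k},w_{\leq k}\varpi_{i_k}}$ modulo the ideal $(\Delta_{\varpi_j,\varpi_j}-1)_{j\in J}$, and keep the same matrix $\widetilde{B}^w$. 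Their images under $\phi$ are exactly the $D$'s, by the very definition $D=\proj_J(\Delta)$.
\item Check that mutation commutes with $\phi$. The exchange relations in Definition~\ref{mutation} are polynomial identities of the form $x_kx_k'=M_1+M_2$ with the same monomials on both sides. Hence $\phi^{-1}$ carries every seed reachable from the initial seed of $\CC[N_K]$ to a seed of the quotient, and carries the generated subring onto the image of the cluster algebra.
\item Since the cluster algebra structure on $\CC[N_K]$ is an equality of rings, namely the cluster algebra equals $\CC[N_K]$, applying $\phi^{-1}$ shows that the quotient equals the cluster algebra of the pulled-back seed.
\end{enumerate}

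The main obstacle is not the transport of structure, which is formal. It is making sure that the word \emph{canonical} is justified, i.e.\ independent of choices. I would argue that the isomorphism (\ref{quotient}) is itself canonical, being induced by $\proj_J$ with no further choices. The only remaining choice is the reduced word $w_K$, and the GY structure on $\CC[N_K]$ is already independent of it up to mutation equivalence, which is part of the canonicity asserted in Theorem~\ref{seed construction}. A secondary point to verify is that algebraic independence of the initial cluster is preserved. This is immediate, since $\phi$ is an isomorphism of domains and so induces an isomorphism of fraction fields.
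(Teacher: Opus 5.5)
Your proposal is correct and follows the paper's route. The paper gives no separate proof: it treats the corollary as immediate from Theorem~\ref{seed construction} and the identification (\ref{quotient}), i.e.\ it transports the cluster structure of $\CC[N_K]$ along the isomorphism induced by $\proj_J$, exactly as you do. One small caution about your Step~2: for $i_k\in K$ the minor $\Delta_{\varpi_{i_k},w_{\leq k}\varpi_{i_k}}$ has degree $\varpi_{i_k}\notin\Pi_J$, so it is not literally an element of $\CC[G/P_K^-]$. The initial variables in the quotient are therefore better taken to be $\phi^{-1}(D_{\varpi_{i_k},w_{\leq k}\varpi_{i_k}})$, for instance the classes of the lifts $\widetilde{D}_{\varpi_{i_k},w_{\leq k}\varpi_{i_k}}$, which your transport argument already allows.
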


Now, this cluster algebra can be lifted to another important cluster algebra using the following results.

\begin{remark} \label{cluster mutation}
The mutation at $k$ for any seed $({{\textnormal{\textbf{x}}}},B)$ of the cluster algebra $\mathcal{A}= \CC[N_K]$ gives the following exchange relation
$$x_k x_k' = M_k + L_k,$$
where $M_k,L_k$ are monomials in the variables $x_1,...,x_{k-1},x_{k+1},...,x_n.$ By Lemma \ref{summation lift}, this lifts to the equation
$$\widetilde{x_k x'_k}=\mu(k)\widetilde{M_k}+\nu(k) \widetilde{L_k},$$
where $\mu(k)$ and $\nu(k)$ are relatively prime monomials in the vatiables $\Delta_{{\varpi_{j}},{\varpi_{j}}}$, in which $j \in J$. This means that $\mu(k)$ and $\nu(k)$ can be expressed as
$$\mu(k) = \prod_{j \in J} \Delta_{{\varpi_{j}},{\varpi_{j}}}^{\alpha_j} \quad \textnormal{ and } \quad \nu(k) = \prod_{j \in J} \Delta_{ {\varpi_{j}}, {\varpi_{j}}}^{\beta_j},$$
where $\min \{\alpha_j, \beta_j \}=0$ for all $j$.\\

Note here that the monomial $M_k$ is the one corresponding to the positive $b_{ik}$'s in Definition \ref{mutation}, while the monomial $L_k$ is the one corresponding to the negative $b_{ik}$'s.
\end{remark}

\begin{definition}
Let $(\textnormal{\textbf{x}},B)$ be a seed of the cluster algebra $\mathcal{A}=\CC[N_K]$. The \textit{homogenization} of $(\textnormal{\textbf{x}},B)$ is the seed $(\widehat{\textnormal{\textbf{x}}},\widehat{B})$ consisting of:
\begin{itemize}
    \item The variables $\widetilde{x}$ obtained by applying the GLS tilde map on each $x \in \textnormal{\textbf{x}}$.
    \item The minors ${\Delta_{\varpi_j,\varpi_j}}$, where ${j \in J}$.
    \item The exchange matrix $\widehat{B}$ that extends the matrix $B$ by $|J|$ row labeled by the elements $j \in J$ such that
\[
  \widehat{b}_{jk} =
  \begin{cases}
\beta_j , & \text{if $\beta_j \neq 0$;} \\
- \alpha_j,& \text{else,}
  \end{cases}
  \]
where $\alpha_j$ and $\beta_j$ are as specified in Remark~\ref{cluster mutation}.
\end{itemize}
The mutable variables of the homogenization are the $\widetilde{x}$ coming from mutable $x \in \textnormal{\textbf{x}}$. All the other variables are frozen.
\end{definition}

\begin{theorem}
If $(\textnormal{\textbf{x}},{B})$ and $(\textnormal{\textbf{y}},{C})$ are two seeds of the cluster algebra $\mathcal{A}=\CC[N_K]$ related by a sequence of mutations, then $ (\widehat{\textnormal{\textbf{x}}},\widehat{B})$ and $(\widehat{\textnormal{\textbf{y}}},\widehat{C})$ are related by exactly the same sequence of mutations.
\end{theorem}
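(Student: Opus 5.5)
By induction on the length of the mutation sequence, it suffices to treat a single mutation. So let $(\textnormal{\textbf{y}},C)=\mu_k(\textnormal{\textbf{x}},B)$ for a mutable index $k$. We must show that $\mu_k(\widehat{\textnormal{\textbf{x}}},\widehat{B})=(\widehat{\textnormal{\textbf{y}}},\widehat{C})$. This amounts to three claims: the new cluster variable produced by mutating the homogenized seed at $k$ is $\widetilde{x_k'}$; the other variables agree; and the extended exchange matrix $\mu_k(\widehat B)$ coincides with $\widehat C$.

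\textbf{Variables.} Variables other than $x_k$ are unchanged by $\mu_k$, and the frozen minors $\Delta_{\varpi_j,\varpi_j}$ are common to both seeds. So only the new variable needs checking. In the homogenized seed, the exchange monomials at $k$ are
\[
\prod_{\widehat b_{ik}>0}\widetilde{x}_i^{\widehat b_{ik}}\prod_{j:\widehat b_{jk}>0}\Delta_{\varpi_j,\varpi_j}^{\widehat b_{jk}}=\nu(k)\,\widetilde{M_k}
\]
and the analogous one equal to $\mu(k)\widetilde{L_k}$. Here I use the multiplicativity of the tilde map (Lemma \ref{summation lift}) to write $\prod \widetilde{x_i}^{b_{ik}}=\widetilde{M_k}$. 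The sign convention in the definition of $\widehat b_{jk}$ must be matched with Remark \ref{cluster mutation}: the $\Delta$'s with positive entries attach to $M_k$. So either the convention is read accordingly, or the $\alpha/\beta$ labels are swapped. I would first fix this bookkeeping.

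The lifted exchange relation of Remark \ref{cluster mutation} gives
\[
\widetilde{x_k}\,\widetilde{x_k'}=\widetilde{x_kx_k'}=\mu(k)\widetilde{M_k}+\nu(k)\widetilde{L_k},
\]
so the homogenized mutation produces exactly $\widetilde{x_k'}$. This step requires the degree hypothesis $a_j(M_k+L_k)=\max\{a_j(M_k),a_j(L_k)\}$ of Lemma \ref{summation lift}. I would justify it via the minimality in (\ref{localization 2}). Since $x_k'$ is a cluster variable and the $\Delta$'s do not divide $\widetilde{x_k'}$ (it is irreducible, with minimal degree), no cancellation of a common $\Delta$ factor can occur.

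\textbf{Matrix.} The mutable part of $\mu_k(\widehat B)$ is $\mu_k(B)=C$, because the mutable rows are untouched by the extension. The remaining task is the frozen rows: we need
\[
\widehat c_{jl}=-\widehat b_{jk}\ \ (l=k),\qquad \widehat c_{jl}=\widehat b_{jl}+\tfrac{|\widehat b_{jk}|\widehat b_{kl}+\widehat b_{jk}|\widehat b_{kl}|}{2}\ \ (l\neq k).
\]
Here $\widehat c_{jl}$ is defined intrinsically from the degree defect $\alpha_j,\beta_j$ of the exchange relation at $l$ in the seed $\textnormal{\textbf{y}}$.

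The case $l=k$ is immediate. The exchange relation at $k$ in $\textnormal{\textbf{y}}$ is the same relation with $M_k$ and $L_k$ swapped, so $\alpha$ and $\beta$ swap and the sign flips.

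For $l\neq k$, I would compute degrees. Write $\deg_j$ for the $\varpi_j$-coordinate of the multi-degree. Then $\widehat b_{jl}$ equals the difference of $\deg_j$ between the two lifted monomials of the exchange relation at $l$, namely $\deg_j(\widetilde{L_l})-\deg_j(\widetilde{M_l})$ up to sign. This holds because $\mu(l),\nu(l)$ are coprime and exactly balance the two sides to the degree of $\widetilde{x_l}\widetilde{x_l'}$. Hence the full column $\widehat b_{\bullet l}$ satisfies a linear identity
\[
\sum_i \widehat b_{il}\deg(\widetilde{x_i})+\sum_j \widehat b_{jl}\varpi_j=0.
\]
That is, the extended exchange matrix annihilates the degree vector, with the frozen $\Delta_{\varpi_j,\varpi_j}$ having degree $\varpi_j$. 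Matrix mutation preserves this kind of linear relation, since it is the standard compatibility of a grading with mutation. The degrees of the new seed are $\deg\widetilde{x_k'}=\sum_{b_{ik}>0}b_{ik}\deg\widetilde{x_i}+\sum_{\widehat b_{jk}>0}\widehat b_{jk}\varpi_j-\deg\widetilde{x_k}$, and the other degrees are unchanged. It follows that $\mu_k(\widehat B)$ annihilates the new degree vector.

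The frozen rows are uniquely determined by this annihilation condition, because the $\varpi_j$ are linearly independent. Therefore $\mu_k(\widehat B)$ and $\widehat C$ have the same frozen rows. For this identification I also need that $\widehat C$'s frozen entries are exactly the balancing exponents, which again uses the coprimality of $\mu(l),\nu(l)$.

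\textbf{Main obstacle.} I expect the main obstacle to be the degree condition in Lemma \ref{summation lift}: justifying that in every seed the lift of $M_l+L_l$ has degree equal to the coordinatewise maximum. Equivalently, $\widetilde{x_l x_l'}=\widetilde{x_l}\widetilde{x_l'}$ must carry no extra $\Delta$ factor. This should follow by induction along the mutation sequence, using that cluster variables of $\CC[N_K]$ are not divisible by any frozen $D_{\varpi_j,\varpi_j}$ image.
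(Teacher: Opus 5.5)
Your reduction to a single mutation is the paper's reduction too, but the paper stops there and delegates the one-step case to Theorem 5.8 of \cite{Kadhem}; you actually prove that step. Your variable step uses the lifted exchange relation of Remark~\ref{cluster mutation} together with multiplicativity of the tilde map. You are right that the stated sign convention for $\widehat b_{jk}$ clashes with that remark. As written, mutating the homogenized seed produces $(\nu(k)\widetilde{M_k}+\mu(k)\widetilde{L_k})/\widetilde{x_k}$, so one needs $\widehat b_{jk}=\alpha_j-\beta_j$. This is not cosmetic: with the opposite sign the frozen rows also fail to mutate compatibly, the error being $|\widehat b_{jk}|\,b_{kl}$.

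Your matrix step is cleaner than an entry-by-entry comparison. Homogeneity of the lifted relations gives $\sum_j \widehat b_{jl}\varpi_j=-\sum_i b_{il}\deg\widetilde{x_i}$, so the $J$-rows of $\widehat B$ are forced by $B$ and the degrees of the lifted variables. Mutation preserves $\widehat B^{T}\widehat d=0$, where $\widehat d$ is the degree vector of the homogenized seed; both cases $b_{kl}\ge 0$ and $b_{kl}<0$ check out. Hence the frozen rows of $\mu_k(\widehat B)$ and $\widehat C$ agree as soon as the new variable is $\widetilde{x_k'}$. What your route buys is that the whole theorem reduces to the lifted exchange relations holding in every seed, including $\mathbf{y}$. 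That is exactly what Remark~\ref{cluster mutation} asserts, and what the definition of $\widehat B$ already presupposes.

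Do not rely on your own sketch of the degree hypothesis of Lemma~\ref{summation lift}; it does not work. Write $\Delta^{a}=\prod_{j\in J}\Delta_{\varpi_j,\varpi_j}^{a_j}$ and $m=\max(\deg\widetilde{M_k},\deg\widetilde{L_k})$. Then $F=\Delta^{m-\deg\widetilde{M_k}}\widetilde{M_k}+\Delta^{m-\deg\widetilde{L_k}}\widetilde{L_k}$ is a homogeneous lift of $x_kx_k'$. Injectivity of $\proj_J$ on each $L(\lambda)$ gives $F=\Delta^{c}\,\widetilde{x_k}\widetilde{x_k'}$ for some $c\in\NN^J$, and the hypothesis is precisely $c=0$.

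Knowing that no $\Delta_{\varpi_j,\varpi_j}$ divides $\widetilde{x_k'}$ says nothing about $c$. When $\deg_j\widetilde{M_k}\neq\deg_j\widetilde{L_k}$, exactly one summand of $F$ is divisible by $\Delta_{\varpi_j,\varpi_j}$, so $c_j=0$ (granting that $\Delta_{\varpi_j,\varpi_j}$ is prime). When the two $j$-degrees coincide, neither summand is divisible by $\Delta_{\varpi_j,\varpi_j}$, and nothing in your argument prevents their sum from being divisible. Also, ``not divisible by $D_{\varpi_j,\varpi_j}$'' is vacuous in $\CC[N_K]$, since $\proj_J(\Delta_{\varpi_j,\varpi_j})=1$. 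So either invoke Remark~\ref{cluster mutation} for every seed, as the paper does, or give a genuine argument for the equal-degree case.
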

\begin{proof}
It is enough to prove the theorem for one single mutation of the seed $(\textnormal{\textbf{x}},{B})$. This was done in Theorem 5.8 by \cite{Kadhem}.
\end{proof}

\begin{corollary}
The coordinate ring $\CC[G/P_K^-]$ contains a cluster algebra $\widehat{\mathcal{A}}$
whose initial seed is given by the pair
$$\bigg( \big\{ \widetilde{ D}_{\varpi_{i_k},w_{\leq k} \varpi_{i_k}}  \big\} \sqcup \{ \Delta_{\varpi_j, \varpi_j} \mid j\in J \}, \widehat{\widetilde{B}^w} \bigg).$$
\end{corollary}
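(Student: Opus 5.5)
The plan is to deduce the corollary directly from the preceding theorem on homogenization, together with Theorem~\ref{seed construction} and Lemma~\ref{summation lift}. Theorem~\ref{seed construction} gives the seed $(\mathbf{x},\widetilde{B}^w)$ of $\CC[N_K]$ with $\mathbf{x}=\{D_{\varpi_{i_k},w_{\leq k}\varpi_{i_k}}\}$. Its homogenization $(\widehat{\mathbf{x}},\widehat{\widetilde{B}^w})$ is exactly the seed in the statement. So it suffices to check two things: that this pair really is a seed, and that the resulting cluster algebra $\widehat{\mathcal{A}}$ lies inside $\CC[G/P_K^-]$.

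\emph{Step 1: it is a seed.} The entries $\widehat{b}_{jk}$ are well defined, since $\min\{\alpha_j,\beta_j\}=0$ by Remark~\ref{cluster mutation}. The added rows correspond to frozen variables, so the principal part of the matrix is still the skew-symmetrizable matrix $\widetilde{B}^w$. For algebraic independence, I use that $\proj_J$ sends $\widetilde{D}$ to $D$ and each $\Delta_{\varpi_j,\varpi_j}$ to $1$. Suppose there were an algebraic relation among the $\widetilde{D}$'s and the $\Delta_{\varpi_j,\varpi_j}$. Decompose it into $\Pi_J$-homogeneous components; the multi-degree of each monomial is read off from the exponents. Then dehomogenize using \eqref{localization 2}. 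This produces a relation among the $D$'s in $\CC[N_K]$, contradicting their algebraic independence. Here one uses that the $\Delta_{\varpi_j,\varpi_j}$ are independent of $\CC[N_K]$ after localization: the degree-$0$ part of the localization recovers $\CC[N_K]$, and the multi-degree grading separates the powers of the $\Delta$'s.

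\emph{Step 2: $\widehat{\mathcal{A}}\subseteq \CC[G/P_K^-]$.} The frozen variables $\Delta_{\varpi_j,\varpi_j}$ and the initial $\widetilde{D}$'s lie in $\CC[G/P_K^-]$ by construction, since the tilde map of Lemma~\ref{GLS tilde map} lands there. Now take any seed of $\widehat{\mathcal{A}}$ obtained by a sequence of mutations. By the homogenization theorem, it equals the homogenization $(\widehat{\mathbf{y}},\widehat{C})$ of the seed $(\mathbf{y},C)$ of $\CC[N_K]$ obtained by the same mutation sequence. Hence every cluster variable of $\widehat{\mathcal{A}}$ has the form $\widetilde{y}$ with $y\in\CC[N_K]$, and therefore lies in $\CC[G/P_K^-]$. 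Since $\widehat{\mathcal{A}}$ is generated by these elements and $\CC[G/P_K^-]$ is a ring, the inclusion follows.

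\emph{Main obstacle.} The delicate point is the identification of mutated variables in Step 2. The mutation formula for $\widehat{B}$ produces the exchange binomial $\mu(k)\widetilde{M_k}+\nu(k)\widetilde{L_k}$. I then need this to equal $\widetilde{x_k}\,\widetilde{x_k'}$. This uses multiplicativity of the tilde map (Lemma~\ref{summation lift}) and the maximality hypothesis on the $a_j$. This is exactly the content of the homogenization theorem, so I will simply invoke it rather than reprove it.
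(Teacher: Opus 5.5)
Your proposal is correct and follows the same route as the paper, which gives no separate proof: the corollary comes from homogenizing the initial seed of Theorem~\ref{seed construction} and applying the homogenization theorem, so every cluster variable of $\widehat{\mathcal{A}}$ is a tilde-lift and therefore lies in $\CC[G/P_K^-]$. Your Step~1 algebraic-independence check is a valid detail the paper leaves unstated: within a fixed $\Pi_J$-degree, the $\Delta_{\varpi_j,\varpi_j}$-exponents are determined by the $\widetilde{D}$-exponents, so applying $\proj_J$ causes no cancellation.
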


Moreover, the work of \cite{Kadhem} generalized previous results of \cite{GLS1} and showed the following:

\begin{theorem}\label{equality theorem}
The localization of the coordinate ring $\CC[G/P_K^-]$ by $\Delta_{\varpi_j, \varpi_j}$, where $j\in J $, equals the localization of the cluster algebra $\widehat{\mathcal{A}}$ by the same elements. In symbols,
$$\CC[G/P_K^-][\Delta_{\varpi_j, \varpi_j}^{-1}]_{j\in J}=\widehat{\mathcal{A}}[\Delta_{\varpi_j, \varpi_j}^{-1}]_{j\in J}.$$
\end{theorem}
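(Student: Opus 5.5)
We prove the two inclusions separately, working inside the localized ring
\[
R := \CC[G/P_K^-][\Delta_{\varpi_j, \varpi_j}^{-1}]_{j\in J}.
\]

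\emph{The inclusion $\widehat{\mathcal{A}}[\Delta_{\varpi_j,\varpi_j}^{-1}] \subseteq R$.} By the preceding corollary, $\widehat{\mathcal{A}} \subseteq \CC[G/P_K^-]$, since every cluster variable of every seed is a homogenization $\widetilde{x}$ of a cluster variable of $\CC[N_K]$ or a frozen minor. The theorem on homogenized mutations says that mutating the homogenized seed produces exactly the lifts $\widetilde{x'}$, which lie in $\CC[G/P_K^-]$ by Lemma~\ref{GLS tilde map}. Inverting the same elements on both sides then gives the inclusion.

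\emph{The reverse inclusion.} Since $\CC[G/P_K^-]=\bigoplus_{\lambda\in\Pi_J} L(\lambda)$, it suffices to treat a homogeneous element $F\in L(\lambda)$ with $\lambda=\sum_j a_j\varpi_j$. Set $f=\proj_J(F)\in\CC[N_K]$. By (\ref{localization}), $f$ corresponds to $F/\prod_j \Delta_{\varpi_j,\varpi_j}^{a_j}$, which is degree zero in $R$. By Theorem~\ref{seed construction}, $\CC[N_K]$ is the cluster algebra $\mathcal{A}$, so $f$ is a polynomial $P$ in cluster variables $x$ from finitely many seeds, with the frozen variables possibly inverted if the GY structure is only up to localization. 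Each cluster variable satisfies
\[
x=\widetilde{x}\big/\textstyle\prod_j\Delta_{\varpi_j,\varpi_j}^{a_j(x)}
\]
in $R$: indeed $\widetilde{x}$ is homogeneous of degree $\sum_j a_j(x)\varpi_j$ and projects to $x$, and the injectivity of $L(\mu)\hookrightarrow\CC[N_K]$ identifies the two. Substituting these expressions into $P$ writes $f$, viewed in $R$ as a degree-zero element, as an element of $\widehat{\mathcal{A}}[\Delta_{\varpi_j,\varpi_j}^{-1}]$, since each $\widetilde{x}$ belongs to $\widehat{\mathcal{A}}$ by the homogenized-mutation theorem. Finally $F=f\cdot\prod_j\Delta_{\varpi_j,\varpi_j}^{a_j}$ in $R$, which proves the inclusion.

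\emph{Main obstacle.} The hard step is making rigorous the identification of $\CC[N_K]$ with the degree-zero part of $R$: the map sending $F/\prod_j\Delta_{\varpi_j,\varpi_j}^{a_j}$ to $\proj_J(F)$ must be a well-defined ring isomorphism. Well-definedness and multiplicativity follow from (\ref{localization}), (\ref{quotient}) and Lemma~\ref{summation lift}. Injectivity needs care: two homogeneous elements of different degrees with the same projection must agree after multiplication by powers of the $\Delta_{\varpi_j,\varpi_j}$. I would first try to obtain this from the minimality in Lemma~\ref{GLS tilde map}, which gives $F=\widetilde{f}\cdot\prod_j\Delta_{\varpi_j,\varpi_j}^{a_j-a_j(f)}$; the key input is the injectivity of $\proj_J$ on each $L(\lambda)$.

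If frozen variables had to be inverted in the cluster structure on $\CC[N_K]$, a further check is needed. The frozen variables are the $\widetilde{D}$ with $s(k)=\infty$; in the homogenized seed some of these are not among the $\Delta_{\varpi_j,\varpi_j}$. In that case I would use the fact that $\CC[N_K]$ equals the cluster algebra without localization, which holds for the GY construction, so that no such inverses appear.
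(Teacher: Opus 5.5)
Your proof is correct and follows the argument the paper relies on; the paper states the theorem without proof, citing \cite{Kadhem}, which adapts \cite{GLS1}. The steps are: identify $\CC[N_K]$ with the degree-zero part of the localization via (\ref{localization}), use that $\CC[N_K]$ is generated by cluster variables with no frozen variables inverted, and substitute $x=\widetilde{x}/\prod_{j}\Delta_{\varpi_j,\varpi_j}^{a_j(x)}$; the other inclusion is immediate from the homogenized-mutation theorem. The injectivity point you flag is settled as you suggest: multiply two homogeneous preimages by powers of the $\Delta_{\varpi_j,\varpi_j}$ up to a common degree $\mu$, then apply injectivity of $\proj_J$ on $L(\mu)$.
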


\begin{remark}
It has been widely expected that $\CC[G/P_K^-]$ is indeed a cluster algebra without doing the localization of the previous theorem. See \cite{GLS1} and \cite{Kadhem}, for example.
\end{remark}

\begin{remark}
Based on the results above, it is easy to see that the cluster structure $\mathcal{A}$ of
$$\CC[N_K] \quad \textnormal{and} \quad \bigslant{{ \mathbb C [G/P_{K}^{-}]}}{(\Delta_{\varpi_j,\varpi_j}-1)}_{j \in J}$$
and the cluster algebra $\widehat{\mathcal{A}}$ are of the same type (both finite or both infinite). Hence, by construction, their finite type classification is exactly the same.
\end{remark}

\begin{remark}
Write how this cluster algebra affects the view of the elements of each of the algebras above.
\end{remark}

\section{Finite type classification}
The main goal of this section is to classify the types of the cluster algebra $\mathcal{A}=\CC[N_K]$. We will treat the types of $G$ case by case. For each type below, we will make a choice of the subset $J$ and express the longest word $w_0$ in two parts, regular and bold. The bold one will be the one that generates $N_K$ in each case. In Section 11 of \cite{GLS1}, it was proved that we only need to do the cases below, no more, no less. Despite the fact that they made a classification there, we provide a different direct approach to do the same classification. The main recipe for doing this is Theorem \ref{finite type} and Theorem \ref{seed construction}.
\subsection{Type $A_n$}
Recall that the longest word $w_0$ of type $A_n$ is given by
$$w_0=s_1 s_2 s_3...s_n s_1 s_2s_3 ... s_{n-1} ... s_1 s_2 s_1.$$
Throughout, the notation $w_{k,...,\ell}$ is used to represent the longest word of type $A_{\ell-k+1}$ indexed by $k,k+1,...,\ell$, namely,
$$w_{k,...,\ell}=s_ks_{k_+1}...s_\ell s_ks_{k_+1}...s_{\ell-1}...s_ks_{k+1}s_k.$$

\begin{enumerate}
    \item Let us choose first $J=\{2\}.$ It is not hard to see that
$$w_0=s_1 w_{3,...,n} \boldsymbol{s_2 s_3...s_ns_1s_2...s_{n-1}} $$
is a valid representation for the longest word of type $A_n$. For the bold part, $w_K=s_2 s_3...s_ns_1s_2...s_{n-1}$, it is straightforward to see that
$$s(k)=\infty \iff k \in \{n-1,n,n+1,...,2n-2\}.$$
Hence, the frozen variables are the ones indexed by $n-1,n,n+1,...,2n-2$, while the others are mutable. Consequently, the northern square submatrix of the exchange matrix $B$ attached to this cluster algebra is
 \[
\begin{blockarray}{ccccccc}
 & 1 & 2 & 3 & ... & n-3 & n-2 \\
\begin{block}{c(cccccc)}
  1&0& -1 & 0 &...& 0 & 0 \\
  2 & 1 & 0 & -1 & ... & 0 & 0 \\
  3 & 0 & 1 & 0 & ... & 0 & 0 \\
  \vdots & & & \ddots
& \ddots & \ddots \\
  n-3 & 0 & 0 & 0 & ... & 0 & -1 \\
  n-2 & 0 & 0 & 0 & ... & 1 & 0 \\
\end{block}
\end{blockarray}
\quad .\]
Clearly, the Cartan counterpart of this matrix is the Cartan matrix of type $A_{n-2}$. Hence, this cluster algebra is of finite type.\\

\item Now, we choose $J=\{ 1,2\}.$
One can verify that
$$w_0=w_{3,...,n} \boldsymbol{s_2s_3...s_ns_1s_2s_3...s_n}$$
is a valid expression for the longest word of type $A_n$. Obviously,
$$s(k)=\infty \iff k \geq n.$$
Hence, the northern $(n-1)$-square matrix of the exchange matrix $B$ is

 \[
\begin{blockarray}{ccccccc}
 & 1 & 2 & 3 & ... & n-2 & n-1 \\
\begin{block}{c(cccccc)}
  1&0& -1 & 0 &...& 0 & 0 \\
  2 & 1 & 0 & -1 & ... & 0 & 0 \\
  3 & 0 & 1 & 0 & ... & 0 & 0 \\
  \vdots & & & \ddots
& \ddots & \ddots \\
  n-2 & 0 & 0 & 0 & ... & 0 & -1 \\
  n-1 & 0 & 0 & 0 & ... & 1 & 0 \\
\end{block}
\end{blockarray}
\quad .\]
The Cartan counterpart here is of type $A_{n-1}$. Therefore, this also gives a cluster algebra of finite type.\\

\item If we choose $J=\{1,n-1\}$, then the following is a valid expression of $w_0$:
$$w_0=w_{2,...,n-2}s_n \boldsymbol{(s_1s_2) (s_{n-1}s_n) (s_3...s_n) (s_2s_3...s_{n-2}) (s_1...s_{n-3})}.$$
\noindent One can now see that
$$s(k)=\infty \iff \bigg( k \in \{n+1, n+2\} \quad \textnormal{or} \quad k\geq 2n-1 \bigg).$$
Based on this, we find that the submatrix of the exchange matrix $B$ generated by the mutable variables is
\[
\begin{blockarray}{cccccccccccccccc}
 & 1 & 2 & 3 & 4 & 5 & 6 &... & n-1 &n & n+3 &n+4 & n+5 &  ... & 2n-3 & 2n-2 \\
\begin{block}{c(ccccccccccccccc)}
  1&0 & 0 
  & 0 & 0 & 0 & 0 &...& 0 & 0 & 0 & 0 & 0 & ... & 0 & 0\\
  2 & 0 & 0 & 0 & 0 & 0 & 0  & ... & 0 & 0  & 1 & 0 & 0  & ...  &0 & 0\\
  3 & 0 & 0 & 0 & -1 & 0 & 0  & ... & 0 & 0 & 0 & 0 & 0 & ... & 0 & 0\\
  4 & 0 & 0 & 1 & 0 & -1 & 0 & ... & 0 & 0 & 0 & 0 & 0 & ... & 0 & 0\\
  5 & 0 & 0 & 0 & 1 & 0 & -1 & ... & 0 & 0 & 0 & 1 & 0 & ... & 0 & 0\\
  6 & 0 & 0 & 0 & 0 & 1 & 0 & \ddots & 0 & 0 & 0 & 0 & 1 & ... & 0 & 0\\
  \vdots & & & & & & \ddots
\\
n-1 & 0 & 0 & 0 & 0 & 0 & 0  & ... & 0 & -1 & 0 & 0 & 0 & ... & 0 & 1\\
n & 0 & 0 & 0 & 0 & 0 & 0  & ... & 1 & 0 & 0 & 0 & 0 & ... & 0 & 0\\
  n+3 & 0 & -1 & 0 & 0 & 0 & 0 & ... & 0 & 0 & 0 & -1 & 0 &... & 0 & 0 \\
  n+4 & 0 & 0 & 0 & 0 & -1 & 0 & ... & 0 &  0 & 1 & 0 & -1 & ... & 0 & 0\\
  n+5 & 0 & 0 & 0 & 0 & 0 & -1 & ... & 0 & 0 & 0 & 1 & 0 & \ddots & 0 & 0\\
  \vdots &   &  &  &  &  &  & \ddots & &  & & & \ddots\\
   & & & & & & & & & &  & & & & & -1\\
  2n-2 & 0 & 0 & 0 & 0 & 0 & 0 & ... & -1 & 0 & 0 & 0 & 0 & ... & 1 & 0 \\
\end{block}
\end{blockarray}
\quad .\]

\noindent One can now do simple row and column permutations to verify that the Cartan counterpart of this matrix is of type $A_{2n-4}$.\\

\item If $J=\{ 1,n \}$, it is not hard to verify that the longest word $w_0$ can be expressed as
$$w_0=w_{2,...,n-1} \boldsymbol{s_1s_2...s_ns_{n-1}s_{n-2}...s_2s_1}.$$
It is routine to check that
$$s(k)= \infty \iff k \in \{ n,n+1,...,2n-1 \}$$

\noindent Calculating the northern square matrix of the exchange matrix $B$, we get the zero matrix of size $n-1$. This provides the cluster algebra that is conventionally said to be of type $(A_1)^{n-1}$.\\

\item For $J=\{1,2,n\}$, it is not hard to see that
$$w_0=w_{3,...,n-1}\boldsymbol{s_1s_2...s_{n-1}s_ns_{n-1}...s_1s_2...s_{n-1}}.$$
Now,
$$s(k)= \infty \iff \Big( k=n \quad \textnormal{ or } \quad k \geq 2n-1 \Big).$$
This implies that the number of mutable variables is $2n-3$. Consequently, the northern square matrix is of size $2n-3$ and is given by
\[
\begin{blockarray}{ccccccccccc}
 & 1 & 2 & 3 & ... & n-1 & n+1 &n+2 &  ... & 2n-3 & 2n-2 \\
\begin{block}{c(cccccccccc)}
  1&0& 0 & 0 &...& 0 & 0 & 0 & ... & 0 & 0\\
  2 & 0 & 0 & 0 & ... & 0 & 0 & 0 & ... &0 & 1\\
  3 & 0 & 0 & 0 & ... & 0 & 0 & 0 & ... & 1 & 0\\
  \vdots & \vdots & & \ddots
& \ddots & \ddots & & & \reflectbox{$\ddots$} & & \vdots \\
n-2 & 0 & 0 & 0 & ... & 0 & 0 & 1 \\
  n-1 & 0 & 0 & 0 & ... & 0 & 1 & 0 & ... & 0 & 0 \\
  n+1 & 0 & 0 & 0 & ... & -1 & 0 & 0 & ... & 0 & 0\\
  \vdots & \vdots  &  &  & \reflectbox{$\ddots$} &  &  & \vdots & & & \vdots\\
  2n-3 & 0 & 0 & -1 & ... & 0 & 0 & 0 & ... & 0 & 0 \\
  2n-2 & 0 & -1 & 0 & ... & 0 & 0 & 0 & ... & 0 & 0 \\
\end{block}
\end{blockarray}
\quad .\]

\noindent Now, let $i + j = 2n - 1$. If we simultaneously swap row $r_i$ with row $r_j$ and column $c_i$ with column $c_j$, the resulting matrix is of type $A_{2n-3}$.\\

\item If $J=\{1\}$, one can use the following expression for the longest word
$$w_0=w_{2,...,n}\boldsymbol{s_1s_2...s_n}.$$
It is easily seen that this gives the zero matrix of size $n$, which we denote by $(A_1)^n$.\\


Now, we do cases specified for $n=4$, that is $A_4$:\\
\item If $J=\{2,3\}$, then one can verify that
$$w_0=s_1s_4\boldsymbol{s_2s_3s_4s_1s_2s_3s_1s_2}.$$
It is straightforward to check that:

$$s(k)=\infty \iff k \in \{3,6,7,8\}.$$

The principal submatrix generated by the mutable indices is

 \[
\begin{blockarray}{ccccc}
 & 1 & 2 & 4 & 5 \\
\begin{block}{c(cccc)}
  1&0& -1 & -1 & 1  \\
  2 & 1 & 0 & 0 & -1  \\
  4 & 1 & 0 & 0 & -1 \\
  5 & -1 & 1 & 1 & 0 \\
\end{block}
\end{blockarray}
\quad .\]

\noindent Looking at the underlying graph of the Cartan counterpart, it is clear that this is of type $D_4$.\\

\item Now, consider the case of $J=\{1,2,3\}$. We may express $w_0$ as follows:
$$w_0=s_4\boldsymbol{s_3s_2s_1s_4s_3s_2s_4s_3s_4}.$$
Obviously,
$$s(k)= \infty \iff k \in \{ 3,6,8,9 \}.$$

\noindent Consequently, the mutable principal part of the exchange matrix is

\[
\begin{blockarray}{cccccc}
 & 1 & 2 & 4 & 5 & 7\\
\begin{block}{c(ccccc)}
  1& 0 & -1 & -1 & 1 & 0 \\
  2 & 1 & 0 & 0 & -1 & 0 \\
  4 & 1 & 0 & 0 & -1 & 1 \\
  5 & -1 & 1 & 1 & 0 & -1 \\
  7 & 0 & 0 & -1 & 1 & 0 \\
\end{block}
\end{blockarray}
\quad .\]

\noindent The Cartan counterpart is of finite type $D_5$. This can be verified by 
applying the permutation $\sigma = (2, 1, 5, 4, 7)$, which transforms it into the standard generalized Cartan matrix of type $D_5$.\\

\item We consider now the choice of $J=\{1,2,3,4\}$ for $A_4$. Here, we can choose the bold part to be the whole standard longest word $w_0$:
$$ w_0=\boldsymbol{s_1s_2s_3s_4s_1s_2s_3s_1s_2s_1 }$$
It is straightforward to verify:
$$s(k)= \infty \iff k \in \{4,7,9,10 \}.$$
It follows that the mutable part of the exchange matrix is

\[
\begin{blockarray}{ccccccc}
 & 1 & 2 & 3 & 5 & 6 & 8\\
\begin{block}{c(cccccc)}
  1& 0 & -1 & 0 & 1 & 0 & 0 \\
  2 & 1 & 0 & -1 & -1 & 1 & 0 \\
  3 & 0 & 1 & 0 & 0 & -1 & 0 \\
  5 & -1 & 1 & 0 & 0 & -1 & 1 \\
  6 & 0 & -1 & 1 & 1& 0 & -1 \\
  8 & 0 & 0 & 0 & -1 & 1 & 0 \\
\end{block}
\end{blockarray}
\quad .\]

\noindent Now, we can see that the underlying graph consists of a central triangle $\{2, 5, 6\}$ where every edge is shared with an adjacent peripheral triangle (namely $\{1, 2, 5\}$, $\{2, 3, 6\}$, and $\{5, 6, 8\}$). A single mutation at vertex $2$ transforms this quiver into a tree graph isomorphic to the standard Dynkin diagram of type $D_6$. Thus, the cluster algebra is of finite type $D_6$.\\

We move now to the case where $n=5$, namely, $A_5$:\\

\item Now, consider $A_5$ and $J=\{3\}$. We may write $w_0$ as
$$w_0=s_1s_2s_1s_4s_5s_4\boldsymbol{s_3s_4s_2s_3s_5s_4s_1s_2s_3}.$$
The frozen variables of the bold part are labeled by:
$$s(k)=\infty \iff k \geq 5 .$$
Thus, the mutable part of the exchange matrix is

\[
\begin{blockarray}{ccccc}
 & 1 & 2 & 3 & 4\\
\begin{block}{c(cccc)}
  1& 0 & -1 & -1 & 1  \\
  2 & 1 & 0 & 0 & -1  \\
  3 & 1 & 0 & 0 & -1  \\
  4 & -1 & 1 & 1 & 0  \\
\end{block}
\end{blockarray}
\quad .\]

\noindent Here, it is easy to verify that applying a matrix mutation at $k=1$ gives a matrix whose Cartan Counterpart is $D_4$.\\

\item For $A_5$ and $J=\{1,3\}$. We may write $w_0$ as
$$w_0=s_2s_4s_5s_4\boldsymbol{s_1s_2s_3s_4s_2s_3s_5s_4s_1s_2s_3}.$$
The frozen variables of the bold part are labeled by:
$$s(k)=\infty \iff k \geq 7 .$$
Thus, the mutable part of the exchange matirx is

\[
\begin{blockarray}{ccccccc}
 & 1 & 2 & 3 & 4 & 5 & 6\\
\begin{block}{c(cccccc)}
  1& 0 & 0 & 0 & 0 & -1 & 0 \\
  2 & 0 & 0 & -1 & 0 & 1 & 0 \\
  3 & 0 & 1 & 0 & -1 & -1 & 1 \\
  4 & 0 & 0 & 1 & 0 & 0 & -1 \\
  5 & 1 & -1 & 1 & 0 & 0 & -1 \\
  6 & 0 & 0 & -1 & 1 & 1 & 0 \\
\end{block}
\end{blockarray}
\quad .\]

\noindent It is an easy exercise now to verify that the mutation at $k=3$ followed by the mutation at $k=5$ gives a matrix whose Cartan type is $E_6$. Therefore, the induced cluster algebra is of type $E_6$.\\

\item For $A_5$ and $J=\{2,3\}$. The longest word $w_0$ can be expressed as
$$w_0=s_2s_4s_5s_4\boldsymbol{s_2s_1s_3s_4s_2s_3s_5s_4s_1s_2s_3}.$$
The frozen variables of the bold part are labeled by:
$$s(k)=\infty \iff k \geq 7 .$$
Thus, the mutable part of the exchange matirx is

\[
\begin{blockarray}{ccccccc}
 & 1 & 2 & 3 & 4 & 5 & 6\\
\begin{block}{c(cccccc)}
  1& 0 & -1 & -1 & 0 & 1 & 0 \\
  2 & 1 & 0 & 0 & 0 & -1 & 0 \\
  3 & 1 & 0 & 0 & -1 & -1 & 1 \\
  4 & 0 & 0 & 1 & 0 & 0 & -1 \\
  5 & -1 & 1 & 1 & 0 & 0 & -1 \\
  6 & 0 & 0 & -1 & 1 & 1 & 0 \\
\end{block}
\end{blockarray}
\quad .\]

\noindent Here again, one can easily verify that the mutation at $k=1$ followed by the mutation at $k=3$ gives a matrix whose Cartan type is $E_6$. Hence, the induced cluster algebra is of type $E_6$.\\

\item Consider now $A_5$ with $J=\{1,2,3\}$. Express $w_0$ as
$$w_0=s_4s_5s_4\boldsymbol{s_2s_1s_2s_3s_4s_2s_3s_5s_4s_1s_2s_3}.$$
The frozen variables of the bold part are labeled by:
$$s(k)=\infty \iff k \geq 8 .$$
Following this, we find that the mutable part of the initial exchange matrix is

\[
\begin{blockarray}{cccccccc}
& 1 & 2 & 3 & 4 & 5 & 6 & 7\\
\begin{block}{c(ccccccc)}
1& 0 & -1 & 1 & 0 & 0 & 0 & 0 \\
2& 1 & 0 & 0 & 0 & 0 & -1 & 0 \\
3 & -1 & 0 & 0 & -1 & 0 & 1 & 0 \\
4 & 0 & 0 & 1 & 0 & -1 & -1 & 1 \\
5 & 0 & 0 & 0 & 1 & 0 & 0 & -1 \\
6 & 0 & 1 & -1 & 1 & 0 & 0 & -1 \\
7 & 0 & 0 & 0 & -1 & 1 & 1 & 0 \\
\end{block}
\end{blockarray}
\quad .\]

\noindent We may use here the mutatuion sequence $\mu_4 \circ \mu_3 \circ \mu_6$ and see that the type of this cluster algebra is $E_7$.\\

We consider now $A_6$:

\item If the group is of type $A_6$ and $J=\{3\}$, then the longest word can be expressed as
$$w_0=s_1s_2s_1s_4s_5s_6s_4s_5s_4\boldsymbol{s_3s_4s_5s_6s_2s_3s_4s_5s_1s_2s_3s_4}.$$

\noindent Considering the bold part, as usual, we get the following indices for frozen variables:
$$s(k)=\infty \iff \bigg( k=4 \textnormal{ or } k \geq 8 \bigg).$$
Thus, the mutable part of the initial exchange matrix is

\[
\begin{blockarray}{ccccccc}
 & 1 & 2 & 3 & 5 & 6 & 7\\
\begin{block}{c(cccccc)}
  1& 0 & -1 & 0 & -1 & -1 & 0 \\
  2 & 1 & 0 & -1 & 0 & -1 & -1 \\
  3 & 0 & 1 & 0 & 0 & 0 & -1 \\
  5 & 1 & 0 & 0 & 0 & -1 & 0 \\
  6 & 1 & 1 & 0 & 1 & 0 & -1 \\
  7 & 0 & 1 & 1 & 0 & 1 & 0 \\
\end{block}
\end{blockarray}
\quad .\]

\noindent Now, if we apply the mutations $\mu_1 \circ \mu_4$, then the resulting matrix has a Cartan counterpart of type $E_6$.\\

\item In the case of $A_6$ and $J=\{2,3\}$, the longest word can be expressed as
$$w_0=s_1s_4s_5s_6s_4s_5s_4\boldsymbol{s_2s_1s_3s_4s_5s_6s_2s_3s_4s_5s_1s_2s_3s_4}.$$

\noindent From the bold part, we get that the frozen variables are indexed by
$$s(k)=\infty \iff \bigg( k=6 \textnormal{ or } k \geq 10 \bigg).$$
Thus, the mutable part of the initial exchange matrix is

\[
\begin{blockarray}{ccccccccc}
 & 1 & 2 & 3 & 4 & 5 & 7 & 8 & 9\\
\begin{block}{c(cccccccc)}
1& 0 & -1 & -1 & 0 & 0 & -1 & 0 & 0 \\
2& 1 & 0 & 0 & 0 & 0 & -1 & 0 & 0 \\
3& 1 & 0 & 0 & -1 & 0 & -1 & -1 & 0 \\
4 & 0 & 0 & 1 & 0 & -1 & 0 & -1 & -1 \\
5 & 0 & 0 & 0 & 1 & 0 & 0 & 0 & -1 \\
7 & 1 & 1 & 1 & 0 & 0 & 0 & -1 & 0 \\
8 & 0 & 0 & 1 & 1 & 0 & 1 & 0 & -1 \\
9 & 0 & 0 & 0 & 1 & 1 & 0 & 1 & 0 \\
\end{block}
\end{blockarray}
\quad .\]

\noindent Now, if we apply the sequence of mutations at $k=6$ followed by $k=7$, then we get a matrix whose Cartan counterpart -after a permutation of rows and columns- is of type $E_8$.\\

Finally, we do the following case for type $A_7$:\\

\item Consider $A_7$ and $J=\{3\}.$ The longest word here can be represented as
$$w_0=s_1s_2s_1s_4s_5s_6s_7s_4s_5s_6s_4s_5s_4\boldsymbol{s_3s_4s_5s_6s_7s_2s_3s_4s_5s_6s_1s_2s_3s_4s_5}.$$

\noindent The frozen variables are indexed by

$$s(k)=\infty \iff \bigg( k=5 \textnormal{ or } k \geq 10 \bigg).$$
From this, it follows that the mutable part of the exchange matrix is

\[
\begin{blockarray}{ccccccccc}
 & 1 & 2 & 3 & 4 & 6 & 7 & 8 & 9\\
\begin{block}{c(cccccccc)}
1& 0 & -1 & 0 & 0 & -1 & -1 & 0 & 0 \\
2& 1 & 0 & -1 & 0 & 0 & -1 & -1 & 0 \\
3& 0 & 1 & 0 & -1 & 0 & 0 & -1 & -1 \\
4 & 0 & 0 & 1 & 0 & 0 & 0 & 0 & -1 \\
6 & 1 & 0 & 0 & 0 & 0 & -1 & 0 & 0 \\
7 & 1 & 1 & 0 & 0 & 1 & 0 & -1 & 0 \\
8 & 0 & 1 & 1 & 0 & 0 & 1 & 0 & -1 \\
9 & 0 & 0 & 1 & 1 & 0 & 0 & 1 & 0 \\
\end{block}
\end{blockarray}
\quad .\]

\noindent Mutating the matrix at $k=6$ followed by $k=7$ yields a matrix whose Cartan counterpart is of type $E_8$. Hence, the cluster algebra is of finite Cartan type $E_8$.
\end{enumerate}

This ends the finite type classification for type $A_n$.

\subsection{Type $B_n$ and Type $C_n$}
As before, we split this subsection into several cases. Throughout, we use the convention that vertex $n$ corresponds to the long root in type $B_n$ and to the short root in type $C_n$. This is the opposite of the convention used by GLS, and therefore our labeling is obtained from theirs by swapping vertices $1$ and $n$.

\begin{enumerate}
    \item If the group $G$ is of type $B_3$ (or $C_3$) and $J=\{3\}$.
    The longest word can be expressed as follows
\[w_0={s_1s_2s_1} \boldsymbol{s_3s_2s_1s_3s_2s_3}.\]

\noindent We have $s(3)=s(5)=s(6)=\infty$ and $s(k) \neq \infty$ for $k\in \{1,2,4 \}$. Thus, the mutable variables are indexed by $1,2,4$, and the frozen variables are indexed by $3,5,6$. One can verify that the mutable part of the exchange matrix is
\[
\begin{blockarray}{cccc}
& 1 & 2 & 4 \\
\begin{block}{c(ccc)}
  1 & 0 & -2 & 1 \\
  2 & 1 & 0 & -1 \\
  4 & -1 & 2 & 0 \\
\end{block}
\end{blockarray}
 \quad . \] 

\noindent Mutate the initial exchange matrix at $k=2$ to obtain an exchange matrix whose type is $B_3 \textnormal{ or }C_3$. Therefore, the cluster algebra is of finite Cartan type $B_3$ (or equivalently $C_3$, as their cluster complexes are isomorphic).\\

\item For the case of $B_2=C_2$ and $J=\{1,2\}$. The longest word can be expressed as
$$w_0=\boldsymbol{s_1s_2s_1s_2}.$$
It is easily seen that the frozen variables are the ones indexed by $k=3$ and $k=4$. Also, the exchange matrix is
\[
\begin{blockarray}{ccc}
 & 1 & 2 \\
\begin{block}{c(cc)}
  1& 0 & -1  \\
  2 & 2 & 0  \\
\end{block}
\end{blockarray}
\quad .\]

\noindent This is a matrix whose Counter Cartan type is $B_2=C_2$. Hence, this also gives a finite cluster algebra whose type is $B_2=C_2$.
\\

\item For $B_n$ (or $C_n$) and $J=\{1\}$, we may express the longest word as
$$w_0=u \cdot \boldsymbol{s_1s_2s_3...s_ns_{n-1}s_{n-2}...s_1},$$
where $u$ is the standard longest word of type $B_{n-1}$ indexed by $2,...,n$. The frozen variables are indexed by
$$s(k)= \infty \iff k \geq n.$$
Clearly, the principal mutable part is the zero matrix of size $n-1$. Hence, this cluster algebra is of type $(A_1)^{n-1}$.
This finishes the finite type classification for type $B_n$ and $C_n$.
\end{enumerate}

\subsection{Type $D_n$} Like the previous subsections, we divide this also to three cases. We label the Dynkin diagram of type $D_n$ so that the two vertices of the fork are $n-1$ and $n$. This is different from the convention used by GLS, whose fork vertices are $1$ and $2$.

\begin{enumerate}
    \item For the case of $D_n$ and $J=\{1\}$, we may express $w_0$ as follows
    $$w_0=s_ns_{n-1}(s_{n-2}s_ns_{n-1}s_{n-2})(s_{n-3}s_{n-2}s_ns_{n-1}s_{n-2}s_{n-3})...\boldsymbol{(s_1s_2...s_{n-2}s_ns_{n-1}s_{n-2}...s_1)}.$$
    Clearly, the frozen variables are indexed by
    $$s(k)=\infty \iff k \geq n-1.$$
    Also, the mutable part of this cluster algebra is the zero matrix of size $n-2$. Hence, this is a finite type cluster algebra of size $(A_1)^{n-2}$.\\
    
    \item For $D_4$ and $J=\{3,4\}$, the longest word can be expressed as
    $$w_0=s_1s_2s_1 \boldsymbol{s_4s_3s_2s_1s_4s_3s_2s_4s_3}.$$
    Clearly,
    $$s(k)=\infty \iff k \in \{4,7,8,9 \}.$$
    It follows that the principal mutable part of the initial exchange matrix is

\[
\begin{blockarray}{cccccc}
 & 1 & 2 & 3 & 5 & 6 \\
\begin{block}{c(ccccc)}
  1& 0 & 0 & -1 & 1 & 0  \\
  2 & 0 & 0 & -1 & 0 & 1 \\
  3 & 1 & 1 & 0 & -1 & -1 \\
  5 & -1 & 0 & 1 & 0 & 0\\
  6 & 0 & -1 & 1 & 0 & 0 \\
\end{block}
\end{blockarray}
\quad .\]
Here, one can see that the mutation at $k=3$ gives a mutation-equivalent matrix to an orientation of the Cartan matrix of type $A_5$. Hence, this is cluster algebra whose type is $A_5$.\\
    
    \item For $D_5$ and $J=\{5\}$, we can use
    $$w_0=s_1s_2s_3s_4s_1s_2s_3s_1s_2s_1\boldsymbol{s_5s_3s_2s_1s_4s_3s_2s_5s_3s_4}.$$
Now, we can see that the mutable variables are indexed by
$$s(k)=\infty \iff k \in \{4,7,8,9,10 \}.$$
Thus, the principal mutable part of the exchange matrix is

\[
\begin{blockarray}{cccccc}
 & 1 & 2 & 3 & 5 & 6 \\
\begin{block}{c(ccccc)}
  1& 0 & 0 & 0 & 0 & -1  \\
  2 & 0 & 0 & -1 & -1 & 1 \\
  3 & 0 & 1 & 0 & 0 & -1 \\
  5 & 0 & 1  & 0 & 0 & 0 \\
  6 & 1 & -1 & 1 & 0 & 0 \\
\end{block}
\end{blockarray}
\quad .\]
It is not hard now to see that the mutation at $k=5$ followed by an index permutation will give the $A_5$ matrix. Hence, the cluster algebra is of type $A_5$.

\end{enumerate}

\section{Consequences}
Following the previous section, one can immediately get that although the constructions of GLS and GY are based on different settings (the first is representation-theoretic and the second is Poisson-geometric), we can get significant results. We start this section by the following remark:

\begin{remark}
In Section 9 of \cite{GLS1}, one can see in 9.3.2 that the cluster algebra constructed by GLS for the coordinate ring of the Schubert cell of the simply laced types matches the construction of GY that we mentioned in Theorem \ref{seed construction}. Therefore, the following result follows.
\end{remark}

\begin{theorem}
The cluster algebra constructions by GLS and by GY on the coordinate ring of Schubert cells are equal.
\end{theorem}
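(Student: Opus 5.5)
To prove that the GLS and GY cluster structures on $\CC[N_K]$ coincide, I would show that they share a common seed. Two cluster algebras inside the same ambient field that have the same cluster variables and the same exchange matrix in one seed are equal, because each is generated by iterated mutations of that seed.

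The first step is to fix a reduced expression $w_K=s_{i_1}\cdots s_{i_m}$ for the element generating $N_K$, in the simply-laced setting where the GLS construction is available. On the GY side, Theorem \ref{seed construction} gives the initial seed. Its variables are the flag minors $D_{\varpi_{i_k},w_{\leq k}\varpi_{i_k}}$, and the frozen ones are those indexed by $k$ with $s(k)=\infty$. Its exchange matrix $\widetilde{B}^w$ is given by the explicit formula involving $p(k)$, $s(k)$ and the Cartan entries $a_{i_ji_k}$.

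The second step is to recall the GLS initial seed attached to the same reduced word. It is given by the rigid module $V_{\mathbf i}$ of the preprojective algebra, whose indecomposable summands $V_k$ have cluster characters $\varphi_{V_k}$. GLS identify each $\varphi_{V_k}$ with the restricted minor $D_{\varpi_{i_k},w_{\leq k}\varpi_{i_k}}$. Their exchange quiver $\Gamma_{\mathbf i}$ has three kinds of arrows: horizontal arrows $k\to p(k)$, and the reverse, between consecutive occurrences of the same index; ordinary arrows $k\to j$ whenever $i_j$ and $i_k$ are adjacent in the Dynkin diagram and the indices interleave as $j<k<s(j)<s(k)$; and no arrows between frozen vertices. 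This is the content of 9.3.2 in \cite{GLS1}, as noted in the preceding remark.

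The key step is then a direct comparison of $\widetilde{B}(\Gamma_{\mathbf i})$ with $\widetilde{B}^w$, entry by entry. In the simply-laced case $a_{i_ji_k}=-1$ for adjacent indices and $0$ otherwise, so the interleaving entries $\pm a_{i_ji_k}$ in Theorem \ref{seed construction} are exactly the ordinary arrows of $\Gamma_{\mathbf i}$. The entries $\pm1$ at $j=p(k)$ and $j=s(k)$ are exactly the horizontal arrows. The frozen sets agree, since both are $\{k: s(k)=\infty\}$. Once the seeds match, the cluster algebras generated inside $\CC(N_K)$ coincide. Both are in fact all of $\CC[N_K]$, by \cite{GLS1,GLS3} and \cite{GY}, which gives the statement.

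I expect the sign and orientation bookkeeping to be the main obstacle. GLS and GY use opposite conventions in places: the labeling swaps noted in the $B_n$ and $D_n$ subsections, the possible transpose or global sign in $b_{ij}$ relative to the quiver convention fixed earlier, and the use of $w_{\leq k}$ versus $w_{>k}$ in the minors. I would first check that any global sign flip $B\mapsto -B$ yields the same cluster algebra, since the mutation formula is symmetric under it. I would then verify the $A_n$ word examples of the previous section against GLS's quivers to confirm the identification of indices.
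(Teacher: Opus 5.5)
Your proposal is correct and follows the paper's route. The paper deduces the equality by citing \cite{GLS1}, 9.3.2, for the fact that the GLS initial seed attached to a reduced word coincides with the GY seed of Theorem \ref{seed construction}, so both cluster algebras are generated by mutation from a common seed. Your entry-by-entry matching of $\widetilde{B}(\Gamma_{\mathbf i})$ with $\widetilde{B}^w$, together with the observation that $B\mapsto -B$ leaves the cluster algebra unchanged, makes explicit the bookkeeping that the paper leaves to that citation.
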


\begin{remark}
For the arbitrary types of the group $G$, one can see that the exchange matrix of \ref{seed construction} above and the one of Proposition 4.2 of \cite{D} are equal.
\end{remark}

As a direct consequence of the previous remark and the previous section, we get the following two corollaries.

\begin{corollary}
The cluster algebra constructions of \cite{D} and \cite{Kadhem} on the coordinate ring of partial flag varieties have the same finite type classifications.
\end{corollary}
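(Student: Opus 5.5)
The plan is to reduce the comparison to the level of Schubert cells, where the two constructions can be compared directly, and then transport the result up to the partial flag varieties through the homogenization procedure of Section 4.

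\emph{Step 1: Demonet's structure as a homogenization.} By the preceding remark, the initial exchange matrix of Proposition 4.2 of \cite{D} coincides with the matrix $\widetilde{B}^w$ of Theorem~\ref{seed construction}. Demonet's seed on $\CC[G/P_K^-]$ is obtained by lifting a seed of $\CC[N_K]$ and adjoining the frozen minors $\Delta_{\varpi_j,\varpi_j}$, $j\in J$. The only difference from the homogenized seed $(\widehat{\textnormal{\textbf{x}}},\widehat{B})$ of \cite{Kadhem} can therefore lie in the frozen rows. In particular, the principal (mutable) parts of the two initial exchange matrices are equal.

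\emph{Step 2: finite type depends only on the principal part.} By Theorem~\ref{finite type}, finite type is decided by whether some seed has a Cartan counterpart of finite type. The Cartan counterpart uses only the principal part. Moreover, the principal part after any mutation sequence is determined by the initial principal part alone: in the mutation rule \eqref{eq1}, the entries $b'_{ij}$ with $i,j$ mutable involve only $b_{ik}$ and $b_{kj}$ with $k$ mutable. Consequently the cluster algebra of \cite{D}, that of \cite{Kadhem}, and the cluster structure on $\CC[N_K]$ from Theorem~\ref{seed construction} are simultaneously of finite or infinite type. When finite, they have the same Cartan type. This is the content of the remark following Theorem~\ref{equality theorem}, and it is the same reasoning as the theorem identifying the GLS and GY constructions.

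\emph{Step 3: conclude from the classification.} The case-by-case classification of Section 5 is carried out purely in terms of the principal part of $\widetilde{B}^w$ for the listed choices of $G$ and $J$. By Steps 1 and 2 it therefore applies verbatim to both flag-variety structures. For the remaining choices of $J$, Section 11 of \cite{GLS1} ensures the type is infinite, again a statement about the same principal part. Hence the two classifications coincide.

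\emph{Main obstacle.} The delicate point is Step 1: confirming that Demonet's seed really is the lift of the same Schubert-cell seed, with the same ordering and labelling of the indices. The Dynkin labellings differ in types $B$, $C$ and $D$, where the vertices are relabelled relative to GLS. I would handle this by matching the reduced words after applying the relabelling permutation. Such a relabelling only conjugates the principal part by a permutation, so the Cartan type is unchanged, and the argument in Step 2 goes through as stated.
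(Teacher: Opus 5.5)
Your proposal is correct and follows the paper's route: the exchange matrix of Proposition 4.2 of \cite{D} equals the one in Theorem~\ref{seed construction}, and homogenization only adds frozen rows, so the type is unchanged. Your Step~2 usefully makes explicit why the frozen rows are irrelevant, which the paper only asserts ``by construction''. Note that Steps~1--2 already give the corollary for every $(G,J)$, so the Section~5 classification and the appeal to Section~11 of \cite{GLS1} in Step~3 only describe what the common classification is, and are not needed for the equality itself.
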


\begin{corollary}
The localizations of cluster algebra constructions of \cite{D} and \cite{Kadhem} on the coordinate ring of partial flag varieties coincide after localization by the minors indexed by $J$. 
\end{corollary}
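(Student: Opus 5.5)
The plan is to reduce the statement to the equality of the cluster structures on $\CC[N_K]$ and then lift that equality through the homogenization construction of Section 4, using Theorem \ref{equality theorem} on both sides.

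First, I will fix the data. By the preceding remark, the initial seed of \cite{D} (Proposition 4.2) and the initial seed of Theorem \ref{seed construction} have the same exchange matrix $\widetilde{B}^w$. The cluster variables in both are the lifts of the same generalized minors $D_{\varpi_{i_k},w_{\le k}\varpi_{i_k}}$. On the \cite{Kadhem} side these lifts are $\widetilde{D}_{\varpi_{i_k},w_{\le k}\varpi_{i_k}}$. On the Demonet side they are the corresponding flag minors $\Delta_{\varpi_{i_k},w_{\le k}\varpi_{i_k}}$. Since $\Delta_{\varpi_{i_k},w_{\le k}\varpi_{i_k}}$ is homogeneous of degree $\varpi_{i_k}$, and this degree is minimal among lifts of the projected minor, Lemma \ref{GLS tilde map} shows that these are the same elements.

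Next, I will compare the frozen rows. In \cite{D} the extra rows indexed by $j\in J$ are determined by homogeneity of the exchange relations. In \cite{Kadhem} they are the entries $\widehat b_{jk}$ obtained from the exponents $\alpha_j,\beta_j$ of Remark \ref{cluster mutation}. Both recipes are forced in the same way. A homogeneous exchange relation $\widetilde{x}_k\widetilde{x}_k'=\mu(k)\widetilde{M_k}+\nu(k)\widetilde{L_k}$ that projects to the exchange relation in $\CC[N_K]$ must, by Lemma \ref{summation lift}, have coprime monomials $\mu(k),\nu(k)$ in the $\Delta_{\varpi_j,\varpi_j}$. These monomials are uniquely determined by the degrees of $\widetilde{M_k}$ and $\widetilde{L_k}$. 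Hence the homogenized seeds agree, and so do the cluster algebras $\widehat{\mathcal{A}}$ they generate.

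Even without matching seeds exactly, localization suffices. By Theorem \ref{equality theorem}, $\widehat{\mathcal{A}}[\Delta_{\varpi_j,\varpi_j}^{-1}]_{j\in J}$ equals $\CC[G/P_K^-][\Delta_{\varpi_j,\varpi_j}^{-1}]_{j\in J}$. The analogous statement for \cite{D}, which is the main theorem there, gives the same localized ring. Therefore the two localizations coincide as subrings of the function field.

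The main obstacle is the second step, verifying that Demonet's frozen rows coincide with the $\widehat b_{jk}$ defined through the GLS tilde map. Demonet defines these rows via module-theoretic degrees, so one has to check that the degree bookkeeping agrees. If that step resists, I will fall back on the direct argument: both localizations equal the localized coordinate ring, so they are equal regardless.
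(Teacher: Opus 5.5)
Your overall plan is sound and agrees with the paper. The paper treats this corollary as immediate from the remark that the exchange matrix of Theorem~\ref{seed construction} equals that of Proposition 4.2 of \cite{D}: both constructions homogenize the same seed of $\CC[N_K]$, so they give the same algebra and a fortiori the same localization. Your frozen-row step fills in something the paper leaves implicit, and it is correct. All variables are $\Pi_J$-homogeneous, so every exchange relation is homogeneous. Since the $\varpi_j$ ($j\in J$) are linearly independent, the entries in the rows indexed by $J$ are then forced by the degrees of $\widetilde{M_k}$ and $\widetilde{L_k}$.

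The justification in your first step fails, however. You identify the lifted initial variables with the minors $\Delta_{\varpi_{i_k},w_{\le k}\varpi_{i_k}}$ on the grounds that their degree $\varpi_{i_k}$ is minimal. This only makes sense when $i_k\in J$. The words generating $N_K$ contain many letters outside $J$: for $J=\{2\}$ in type $A_n$, the word $s_2s_3\cdots s_ns_1s_2\cdots s_{n-1}$ contains $s_1,s_3,\dots,s_n$. For such $k$ one has $\varpi_{i_k}\notin\Pi_J$, so $\Delta_{\varpi_{i_k},w_{\le k}\varpi_{i_k}}$ is not an element of $\CC[G/P_K^-]=\bigoplus_{\lambda\in\Pi_J}L(\lambda)$ at all. (The expression $\proj_J(\Delta_{\varpi_{i_k},w_{\le k}\varpi_{i_k}})$ in Theorem~\ref{seed construction} is literal only for $i_k\in J$.) The lift $\widetilde D_{\varpi_{i_k},w_{\le k}\varpi_{i_k}}$ has its degree in $\Pi_J$, a multiple of $\varpi_2$ in this example. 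So Lemma~\ref{GLS tilde map} cannot identify it with that minor, and your description of Demonet's lifted cluster as these minors is not right.

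The repair is to drop the minimality argument. What you need is that \cite{D}, like \cite{Kadhem}, obtains its flag-variety seed by applying the lift $f\mapsto\widetilde f$ to the common seed of $\CC[N_K]$; then the lifted variables agree by definition.

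Your third step is a genuinely independent proof via Theorem~\ref{equality theorem}. It rests, though, on \cite{D} containing the statement $\widehat{\mathcal A}_{D}[\Delta_{\varpi_j,\varpi_j}^{-1}]_{j\in J}=\CC[G/P_K^-][\Delta_{\varpi_j,\varpi_j}^{-1}]_{j\in J}$. The paper does not quote such a result, so you should locate and cite it precisely rather than invoke ``the main theorem there''; the flag-variety results in \cite{D} are applications of its main categorification theorem.
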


\end{document}